\newtheorem{theorem}{Theorem}[section]
\newtheorem{lemma}[theorem]{Lemma}
\newtheorem{corollary}[theorem]{Corollary}
\theoremstyle{definition}
\begin{document} 

\title{Shapes of Polynomial Julia Sets}

\author[Kathryn A. Lindsey]{Kathryn A. Lindsey}
\address[Kathryn Lindsey]{ Department of Mathematics, Cornell University\\ Ithaca, NY 14853, USA}
\email{ klindsey@math.cornell.edu}


\subjclass{}
\keywords{}

\date{\today}
\maketitle 

\begin{abstract}
Any Jordan curve in the complex plane can be approximated arbitrarily well in the Hausdorff topology by Julia sets of polynomials.   Finite collections of disjoint Jordan domains can be approximated by the basins of attraction of rational maps.  \end{abstract}

  \begin{figure}[!h] 
  \centering
  \includegraphics[width=\linewidth]{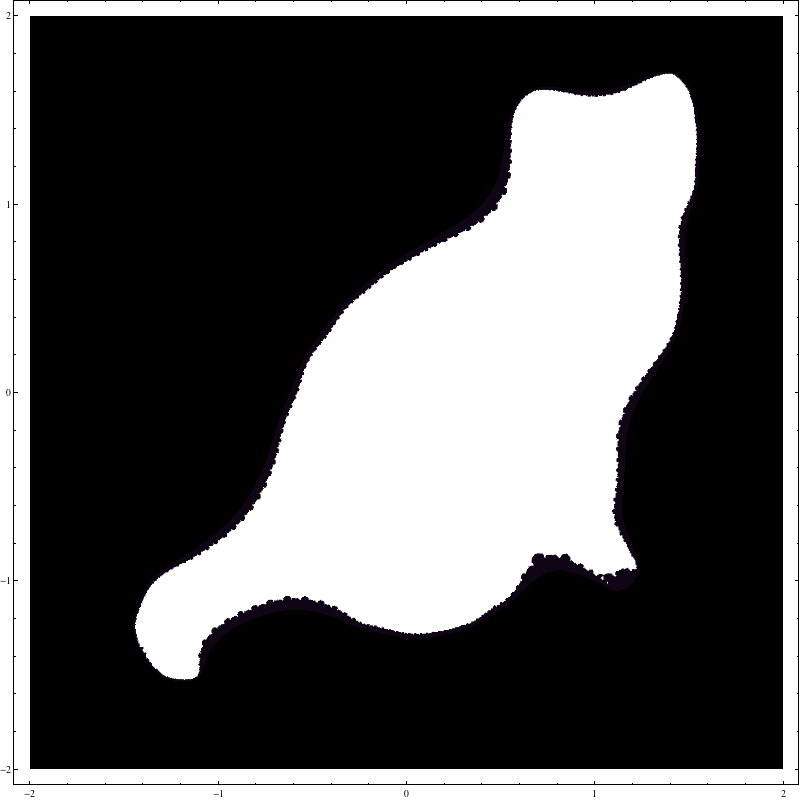}
  \caption[]{A filled Julia set in the shape of a cat.  The polynomial which generates this Julia set has degree 301.
  }
  \end{figure}
  
  For any complex-valued rational function $f$ on the Riemann sphere $\hat{\mathbb{C}}$ such that the point $\infty$ is an attracting fixed point of $f$, define the following notation:
  
  $$\begin{array}{lcl}
\mathcal{L}(f) &=& \textrm{the basin of attraction of }   \infty \textrm{ for the map } f,  \\
\mathcal{K}(f) &=& \hat{\mathbb{C}} \setminus \mathcal{L}(f), \\
\mathcal{J}(f) &=& \partial(\mathcal{K}(f)), \\
 \end{array}$$
 where $\partial(X)$ denotes the boundary of a set $X$.  If $f$ is a a polynomial then $\mathcal{J}(f)$ is the \emph{Julia set} of $f$, $\mathcal{K}(f)$ is the \emph{filled Julia set} of $f$, and $\mathcal{K}(f)$ is given by $$\mathcal{K}(f) = \{z \in \hat{\mathbb{C}} \mid f^n(z) \nrightarrow \infty \textrm{ as } n \rightarrow \infty\}.$$

 Theorem \ref{t:JuliaFitInAnnulus} gives a constructive proof that for any open bounded annulus $A$ in the complex plane, there exists a polynomial $P$ such that $\mathcal{J}(P)$ contains an essential closed curve of the closure $\bar{A}$, the bounded connected component of $\hat{\mathbb{C}} \setminus \bar{A}$ is contained in $\mathcal{K}(P)$, and the unbounded connected component of $\hat{\mathbb{C}} \setminus \bar{A}$ is contained in $\mathcal{L}(P)$.  Theorem \ref{t:HausdorffVersion} reformulates this result in terms of the Hausdorff metric, showing that any Jordan curve in the complex plane can be approximated arbitrarily well in the Hausdorff topology by Julia sets of polynomials.   Theorems \ref{t:RationalVersion}, \ref{t:HausdorffVersionRational} and \ref{t:HausdorffRationalAnnuliVersion} combine these polynomials to form rational maps whose basins of attraction approximate more complicated sets.  
  
   \subsection*{Acknowledgments}
   \medskip
     This project was initially a joint undertaking with William P. Thurston, who conjectured that Theorem \ref{t:HausdorffVersion} should be true and guided the author toward the approach presented in this paper.  Sadly, William Thurston passed away in August 2012, before this project reached fruition. 
     \medskip
  
The author wishes to express her gratitude to the anonymous referee, whose numerous helpful suggestions greatly improved the paper.   She also thanks John Smillie, John Hamal Hubbard and Sarah Koch for many helpful conversations during the development of this paper.  Kathryn Lindsey was supported by the Department of Defense through a National Defense Science and Engineering Graduate Fellowship and by the National Science Foundation through a Graduate Research Fellowship.  


\section{Annular decompositions and polynomial Julia sets}

   Define $\mathcal{A}$ to be the collection of all subsets of $\hat{\mathbb{C}}$ which can be written as the closure of an open, bounded (i.e. has empty intersection with some ball about $\infty$) annulus in $\hat{\mathbb{C}}$.   For any $A \in \mathcal{A}$, $\hat{\mathbb{C}} \setminus A$ consists of two topological disks; let $A_{\infty}$ denote the unbounded connected component of $\hat{\mathbb{C}} \setminus A$ and let $A_0$ denote the bounded connected component of $\hat{\mathbb{C}} \setminus A$.  Thus for each $A \in \mathcal{A}$, $\hat{\mathbb{C}} = A_{\infty} \sqcup A \sqcup A_0$.  
 
For each $A \in \mathcal{A}$, let $\gamma_A$ be a fixed Jordan curve in $\mathbb{C}$ contained in the interior of $A$ such that $\pi_1(\gamma_A)$ injects into $\pi_1(A)$ under the map induced by inclusion (i.e. $\gamma_A$ is an essential core curve of the annulus $A$).  Define $U_A$ to be the unbounded connected component of $\hat{\mathbb{C}} \setminus \gamma_A$, which is a topological disk.   Let  $\widetilde{\varphi_A}$ denote a fixed inverse Riemann map $$\widetilde{\varphi_A}: \{z \in \hat{\mathbb{C}} \mid 1 < |z| \} \rightarrow U_A$$ such that the points at infinity correspond. For some sufficiently small $\epsilon = \epsilon(A) >0$, the set $\widetilde{\varphi_A}(\{z \in \hat{\mathbb{C}} \mid |z|=1+\epsilon\}$ is contained in the interior of $A$.  Define $C_A$ to be this set, i.e. $$C_A = \widetilde{\varphi_A}(\{z \in \hat{\mathbb{C}} \mid |z|=1+\epsilon \}).$$ 
We will now rescale the conformal map $\widetilde{\varphi_A}$ so that the unit circle maps to $C_A$.  Specifically, define 
the map $\varphi_A :\{ z \in \hat{\mathbb{C}} \mid  \frac{1}{1+\epsilon} < |z| \} \rightarrow \hat{\mathbb{C}}$ by $$\varphi_A(z)=\widetilde{\varphi_A}\left((1+\epsilon)(z)\right).$$

\begin{lemma} \label{l:CAproperties} For each $A \in \mathcal{A}$, 
\begin{enumerate}
\item The set $C_A$ is a Jordan curve contained in the interior of $A$ such that $\pi_1(C_A)$ injects into $\pi_1(A)$ under the map induced by inclusion.  
\item $C_A$ is rectifiable with respect to the Euclidean metric on $\mathbb{C}$. 
\item $\varphi_A^{\prime}$ is non-vanishing and of bounded variation on the unit circle. 
\end{enumerate}
\end{lemma}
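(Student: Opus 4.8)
The plan is to exploit the single key feature of the rescaled map: the definition $\varphi_A(z) = \widetilde{\varphi_A}((1+\epsilon)z)$ was arranged precisely so that the unit circle is sent to the circle $\{|w| = 1+\epsilon\}$, which is \emph{compactly contained} in the domain $\{|w| > 1\}$ of the conformal map $\widetilde{\varphi_A}$. Consequently $\widetilde{\varphi_A}$ is holomorphic, injective, and has non-vanishing derivative on some open annulus $\{1 + \tfrac{\epsilon}{2} < |w| < 1 + 2\epsilon\}$ containing $\{|w|=1+\epsilon\}$. All three assertions will be deduced from this regularity, together with one purely topological argument for the $\pi_1$-statement in (1).

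\emph{Part (1).} Since $\widetilde{\varphi_A}$ is a homeomorphism from $\{|w|>1\}\cup\{\infty\}$ onto $U_A$ and $\{|w| = 1+\epsilon\}$ is a Jordan curve in its domain, its image $C_A$ is a Jordan curve; and $C_A \subset \mathrm{int}(A)$ holds by the choice of $\epsilon$ recorded above. For the injectivity of $\pi_1(C_A) \to \pi_1(A)$, recall that an embedded circle in the annulus $A$ is essential precisely when it separates $A_0$ from $A_\infty$ in $\hat{\mathbb{C}}$. I would first identify the two complementary components of $C_A$: the set $W := \widetilde{\varphi_A}(\{|w|>1+\epsilon\})$ is open, connected, contains $\infty$, and is disjoint from $C_A$, and a routine compactness argument (using that $\{|w|\ge 1+\epsilon\}\cup\{\infty\}$ is closed in $\hat{\mathbb{C}}$ and $\widetilde{\varphi_A}$ is a homeomorphism onto $U_A$) shows $\overline{W} = W \cup C_A$; hence $W$ is the unbounded complementary component of $C_A$, and the bounded one is $D := \hat{\mathbb{C}}\setminus\overline{W} = \widetilde{\varphi_A}(\{1<|w|<1+\epsilon\}) \cup \gamma_A \cup V_A$, where $V_A$ denotes the bounded component of $\hat{\mathbb{C}}\setminus\gamma_A$. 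Because $\gamma_A$ is essential in $A$ it separates $A_0$ from $A_\infty$ in $\hat{\mathbb{C}}$; as $A_\infty$ is connected, contains $\infty$, and is disjoint from $\gamma_A\subset\mathrm{int}(A)$, it lies in $U_A$, so $A_0$ lies in the other component $V_A\subset D$. On the other hand $A_\infty$, being connected and disjoint from $C_A\subset\mathrm{int}(A)$, lies in the complementary component of $C_A$ containing $\infty$, namely $W$. Thus $C_A$ separates $A_0$ from $A_\infty$, as required.

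\emph{Parts (2) and (3).} The map $\theta \mapsto \widetilde{\varphi_A}((1+\epsilon)e^{i\theta})$ is the restriction to a circle of a function holomorphic on an open neighborhood of that circle, hence is real-analytic in $\theta$ with derivative $(1+\epsilon)i e^{i\theta}\,\widetilde{\varphi_A}'((1+\epsilon)e^{i\theta})$, which is non-vanishing since $\widetilde{\varphi_A}$ is conformal; it is therefore a real-analytic, regular parametrization of $C_A$, and in particular $C_A$ is rectifiable, giving (2). For (3), $\varphi_A'(z) = (1+\epsilon)\,\widetilde{\varphi_A}'((1+\epsilon)z)$ is holomorphic and non-vanishing on a neighborhood of $\{|z|=1\}$ (there $(1+\epsilon)z$ has modulus $1+\epsilon>1$, so it stays in the region where $\widetilde{\varphi_A}$ is conformal), so its restriction to the unit circle is non-vanishing and, being $C^1$ in the angular variable, is of bounded variation.

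\emph{Where the work is.} None of this is deep; the only step that is not a formality is the topological identification of the complementary components of $C_A$ and the verification in (1) that $A_0$ and $A_\infty$ land on opposite sides of $C_A$. Items (2) and (3) are immediate consequences of the fact that the rescaling by $1+\epsilon$ places the unit circle strictly inside the region of holomorphy of $\widetilde{\varphi_A}$ — which is exactly why the rescaled map $\varphi_A$ was introduced.
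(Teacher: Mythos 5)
Your proof is correct, and it differs from the paper's in both halves. For the essentialness of $C_A$ in part (1), the paper simply observes that $C_A$ is homotopic to $\gamma_A$ inside $A$ (the two curves cobound the annulus $\widetilde{\varphi_A}(\{1<|w|\le 1+\epsilon\})$, which one checks lies in $A$), so $\pi_1(C_A)\to\pi_1(A)$ injects because $\pi_1(\gamma_A)\to\pi_1(A)$ does; your route instead identifies the two complementary components of $C_A$ explicitly and shows $C_A$ separates $A_0$ from $A_\infty$. The two arguments carry essentially the same topological content (your component analysis is what one would use to justify that the cobounding annulus sits inside $A$), but yours is more self-contained and spells out the step the paper leaves implicit. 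For parts (2) and (3), the paper invokes the maximum modulus principle to bound $|\varphi_A'|$ on $S^1$ and deduces both rectifiability and bounded variation from that; your argument uses the stronger and more directly relevant fact that the rescaling places the unit circle strictly inside the domain of holomorphy, so $\varphi_A'$ is real-analytic (in particular $C^1$) in the angular variable. This is actually the better argument for (3): mere boundedness of $|\varphi_A'|$ on $S^1$ gives rectifiability of $C_A$ but does not by itself give bounded variation of $\varphi_A'$, whereas continuous differentiability on the compact circle does. So your proposal is not only valid but closes a small logical gap in the paper's one-line justification of item (3).
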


\begin{proof}
The set $C_A$ is a conformal image of $S^1$ and hence a Jordan curve.  It is homotopic to $\gamma_A$, whose fundamental group injects nontrivially by assumption, proving (i).  Since $\varphi_A$ is a conformal map, $\varphi^{\prime}$ is everywhere nonzero.  By the maximum modulus principle, $|\varphi_A^{\prime}|$ is bounded on $S^1$, which is itself of finite Euclidean arc length, implying (ii) and (iii).    
\end{proof}

For each $A \in \mathcal{A}$, the set $\hat{\mathbb{C}} \setminus C_A$ consists of two topological disks; let $O_A$ denote the unbounded connected component of $\hat{\mathbb{C}} \setminus C_A$ and let $I_A$ denote the bounded connected component of $\hat{\mathbb{C}} \setminus C_A$.  Thus for each $A \in \mathcal{A}$, we have a decomposition $\hat{\mathbb{C}} = O_A \sqcup C_A \sqcup I_A.$

Because $\varphi_A$ is injective, the coefficients for all the terms of degree greater than $1$ in the Laurent series centered at $0$ for $\varphi_A$ must be zero.  Hence any such Laurent series has the form 
$$\varphi_A(z) = c_Az + c_{(A,0)} + \frac{c_{(A,1)}}{z} + \frac{c_{(A,2)}}{z^2} + ... \ , \ \ \  |z| \geq 1,$$ with $c_A, c_{(A,0)}, c_{(A,1)},... \in \mathbb{C}$.  For each pair of natural numbers $(k,n)$ with $k \leq n$ and for each $A \in \mathcal{A}$, set 
$$r_A^{(k,n)} = \varphi_A(e^{k2\pi i/n}).$$

For each $n \in \mathbb{N}$ and $A \in \mathcal{A}$, define $\omega_{A,n}: \hat{\mathbb{C}} \rightarrow \hat{\mathbb{C}}$ by 

$$ \omega_{A,n}(z) = c_A^{-n}\prod_{j=1}^n (z-r_A^{(k,n)}). $$
For each $n \in \mathbb{N}$ and $A \in \mathcal{A}$, define the polynomial $P_{A,n}:\hat{\mathbb{C}} \rightarrow \hat{\mathbb{C}}$ by $$P_{A,n}(z) = z(\omega_{A,n}(z)+1).$$ 

The definitions of the map $\varphi$ and the polynomials $\omega_n$ use only the Jordan curve $C_A$, and not the fact that $C_A$ came from the annulus $A$.  Thus any Jordan curve $C$ in $\mathbb{C}$ determines both a conformal map $\varphi_C$ and a family of polynomials $\omega_{C,n}$, $n \in \mathbb{N}$; these polynomials are called \emph{fundamental polynomials of Lagrange interpolation}.  Curtiss proved the following two theorems about the uniform convergence of such polynomials in \cite{Curtiss} (we transcribe them here using our notation):

\begin{theorem} \label{t:Curtiss} \cite{Curtiss}
If $C$ is a rectifiable Jordan curve in $\mathbb{C}$, then 
\begin{equation} \label{eq:1.2} \lim_{n \rightarrow \infty} \omega_{C,n}(z) = -1\end{equation}
uniformly for $z$ on any closed subset of the region interior to $C$, and 
\begin{equation} \label{eq:1.3} \lim_{n \rightarrow \infty} \frac{\omega_{C,n}(z)}{w^n -1} = 1, \ \ \  \textrm{where } z=\varphi_C(w),\end{equation}
 uniformly for $z$ on any closed subset of the region exterior to $C$. 
\end{theorem}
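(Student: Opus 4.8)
The plan is to rewrite each of the two products $\prod_{k=1}^{n}\bigl(z-r_{C}^{(k,n)}\bigr)$ so that the expected asymptotic factor ($-1$ in the interior, $w^{n}-1$ in the exterior) is pulled out explicitly, leaving a product over $n$-th roots of unity of a function that is analytic on $\{|u|>1\}\cup\{\infty\}$, continuous and non-vanishing up to $|u|=1$, and equal to $1$ at $\infty$; the theorem then follows from the single fact that any such product tends to $1$.

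For the reductions, write $\zeta_{k}=e^{2\pi i k/n}$, so $r_{C}^{(k,n)}=\varphi_{C}(\zeta_{k})$ and $\omega_{C,n}(z)=c_{C}^{-n}\prod_{k}(z-\varphi_{C}(\zeta_{k}))$, where $c_{C}$ is the leading Laurent coefficient of $\varphi_{C}$. Using $\prod_{k=1}^{n}\zeta_{k}=(-1)^{n+1}$ and $\varphi_{C}(u)=c_{C}u+O(1)$ near $\infty$, the factorization $z-\varphi_{C}(\zeta_{k})=-c_{C}\zeta_{k}\,G_{z}(\zeta_{k})$ with $G_{z}(u):=(\varphi_{C}(u)-z)/(c_{C}u)$ collapses to $\omega_{C,n}(z)=-\prod_{k}G_{z}(\zeta_{k})$. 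For $z$ interior to $C$, the function $G_{z}$ is analytic on $\{|u|>1\}\cup\{\infty\}$, continuous up to $|u|=1$, non-vanishing on $\{|u|\ge1\}$ (since $\varphi_{C}$ maps $\{|u|\ge1\}$ onto the closed exterior of $C$, which omits $z$), and $G_{z}(\infty)=1$; so \eqref{eq:1.2} is equivalent to $\prod_{k}G_{z}(\zeta_{k})\to 1$, uniformly for $z$ in a compact subset of the interior. Dually, for $z=\varphi_{C}(w)$ with $|w|>1$ the identity $w^{n}-1=\prod_{k}(w-\zeta_{k})$ gives $\omega_{C,n}(z)/(w^{n}-1)=\prod_{k}F_{w}(\zeta_{k})$, where $F_{w}(u):=(\varphi_{C}(u)-\varphi_{C}(w))/(c_{C}(u-w))$ has a removable singularity at $u=w$ (value $\varphi_{C}'(w)/c_{C}\ne 0$) and is analytic on $\{|u|>1\}\cup\{\infty\}$, continuous up to $|u|=1$, non-vanishing on $\{|u|\ge1\}$ by injectivity of $\varphi_{C}$, with $F_{w}(\infty)=1$; thus \eqref{eq:1.3} is equivalent to $\prod_{k}F_{w}(\zeta_{k})\to 1$, uniformly on compact subsets of $\{|w|>1\}$.

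The common core is then: if $F$ is analytic on $\{|u|>1\}\cup\{\infty\}$, continuous and non-vanishing up to $|u|=1$, with $F(\infty)=1$, then $\sum_{k=1}^{n}\log F(\zeta_{k})\to 0$, where $\log F$ is the branch vanishing at $\infty$ (it exists since the domain is simply connected). Expanding $\log F(u)=\sum_{m\ge 1}a_{m}u^{-m}$ on $|u|>1$ and using that $\sum_{k=1}^{n}\zeta_{k}^{-m}$ equals $n$ when $n\mid m$ and $0$ otherwise gives the aliasing identity $\sum_{k=1}^{n}\log F(\zeta_{k})=n\sum_{j\ge 1}a_{jn}$ (the $m=0$ term is absent because $\tfrac1{2\pi}\int_{0}^{2\pi}\log F(e^{i\theta})\,d\theta=\log F(\infty)=0$ by the mean value property; when $\sum_{m}|a_{m}|$ diverges the right-hand series is interpreted by Abel summation). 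Everything thus reduces to showing $n\sum_{j\ge1}a_{jn}\to 0$; since $G_{z}$ and $F_{w}$ depend continuously on their parameters, the relevant families of functions $F$ form compact sets in a suitable norm, which upgrades this to the uniform convergence required above.

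The decay $n\sum_{j\ge1}a_{jn}\to0$ is the main obstacle, and it is where a hypothesis on $C$ must be spent. If $\varphi_{C}$ extends analytically across the unit circle --- which is the only case needed in this paper, since each $\varphi_{C_{A}}$ is defined on $\{|z|>1/(1+\epsilon)\}$ --- then $|a_{m}|$ decays geometrically, so $n\sum_{j}|a_{jn}|\to 0$ geometrically and we are done. For a general rectifiable $C$ one has only that $\varphi_{C}$, hence $\log F$, is of bounded variation on $|u|=1$ (the arc length of $C$ is the total variation of the boundary parametrization), giving merely $a_{m}=O(1/m)$, which is too weak for absolute convergence of the aliased tail. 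Here I would argue more carefully: rewrite $\sum_{k=1}^{n}\log F(\zeta_{k})$ as the principal-value integral $\tfrac{in}{2\pi}\,\mathrm{p.v.}\int_{0}^{2\pi}\log F(e^{i\theta})\cot(n\theta/2)\,d\theta$, integrate by parts against the bounded-variation function $\log F(e^{i\theta})$ (the boundary term vanishing by periodicity), and invoke that $\log|\sin(n\theta/2)|$ tends weakly to its mean as $n\to\infty$ --- a Riemann--Lebesgue argument, legitimate because the derivative measure of $\log F$ is absolutely continuous (it has one-sided Fourier spectrum, so the F.\ and M.\ Riesz theorem applies). Note that the crude $n$-th root asymptotics $|\omega_{C,n}(z)|^{1/n}\to 1$ follow at once from the mean value property for $\log|\cdot|$; the substance of the theorem is convergence of $\omega_{C,n}$ itself, not merely of its $n$-th root, and that is exactly what necessitates the finer Riemann-sum analysis just described.
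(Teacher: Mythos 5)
This theorem is imported from \cite{Curtiss}; the paper itself offers no proof, so your attempt can only be measured against Curtiss's original argument. Your reduction is in fact exactly the classical one: factoring out $-\prod_k \zeta_k$ (resp.\ $\prod_k(w-\zeta_k)=w^n-1$) converts both \eqref{eq:1.2} and \eqref{eq:1.3} into the single claim that $\sum_{k=1}^n \log F(e^{2\pi i k/n})\to 0$ for $F$ analytic outside the closed unit disk, continuous and zero-free up to the circle, with $F(\infty)=1$ --- i.e.\ that a Riemann sum of a one-sided boundary function converges to its mean faster than $1/n$. That is precisely how Curtiss frames the problem, and your observation that the curves $C_A$ actually used in this paper are analytic (each $\varphi_{C_A}$ extends conformally across $|w|=1$), so that the aliased tail $n\sum_{j\ge 1}a_{jn}$ decays geometrically, is correct and yields a complete, self-contained proof of everything this paper needs from the theorem.

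For a general rectifiable $C$, however, your final step does not close, and this is a genuine gap. Rectifiability gives $\varphi_C'\in H^1$, hence $g=\log F$ absolutely continuous with $g'\in H^1$ and $a_m=\widehat{g'}(-m)/(-im)=o(1/m)$; but then $n\sum_{j\ge1}|a_{jn}|\le\sum_{j\ge1}\epsilon_{jn}/j$ with $\epsilon_m\to0$ merely bounded, and the divergence of $\sum_j 1/j$ means termwise decay proves nothing. Your proposed repair is circular: integrating by parts produces $\int_0^{2\pi}\log|2\sin(n\theta/2)|\,g'(\theta)\,d\theta$, and expanding $\log|2\sin(n\theta/2)|=-\sum_{m\ge1}\cos(mn\theta)/m$ and using one-sidedness returns you to the very sum $\sum_m \widehat{g'}(-mn)/m$ you started from. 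The asserted ``Riemann--Lebesgue'' weak convergence of these kernels to their mean holds against bounded test functions, not against a general $L^1$ (or even $H^1$) density, and the hypothesis cannot be discarded: for $F$ merely continuous up to $|u|=1$ (take Fourier support on $\{2^l\}$ with coefficients $1/l^2$) one gets $\sum_k F(\zeta_k)\sim 2^{l_0}/l_0\to\infty$ along $n=2^{l_0}$, so rectifiability must be used quantitatively at exactly the point you wave at. This estimate is the entire content of Curtiss's key lemma; either supply a real argument for it or cite it honestly, and in the context of this paper the cleanest fix is to state and prove only the analytic-curve case, which your sketch already handles.
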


\begin{theorem} \cite{Curtiss} \label{t:Curtiss2} If $\varphi_C^{\prime}(w)$ is non-vanishing and of bounded variation for $|w|=1$ then limit (\ref{eq:1.3}) in Theorem \ref{t:Curtiss} converges uniformly for $z$ in the closed region $C \sqcup C_{\infty}$, where $C_{\infty}$ denotes the unbounded connected component of $\hat{\mathbb{C}} \setminus C$.  
\end{theorem}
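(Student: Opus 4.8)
The plan is to reduce the assertion to a statement that a Riemann‑type sum over the $n$th roots of unity decays, in the spirit of \cite{Curtiss}. Fix $z$ in the closed region $C \sqcup C_{\infty}$ and write $z = \varphi_C(w)$ with $|w|\ge 1$ (allowing $w=\infty$). Put $\zeta_n = e^{2\pi i/n}$, so that the interpolation nodes are $r_C^{(j,n)} = \varphi_C(\zeta_n^{\,j})$ and $w^n-1 = \prod_{j=1}^n(w-\zeta_n^{\,j})$. Since $\omega_{C,n}(\varphi_C(w)) = c_C^{-n}\prod_{j=1}^n\bigl(\varphi_C(w)-\varphi_C(\zeta_n^{\,j})\bigr)$, the quotient in (\ref{eq:1.3}) factors as
$$\frac{\omega_{C,n}(z)}{w^n-1} \;=\; \prod_{j=1}^n H_w(\zeta_n^{\,j}), \qquad H_w(u) \;:=\; \frac{\varphi_C(w)-\varphi_C(u)}{c_C\,(w-u)},$$
where at an $n$th root of unity the left side is read as this product, i.e.\ as the value obtained by cancelling the common zero (the genuine $0/0$ occurring exactly at interpolation nodes). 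So it suffices to prove $\sum_{j=1}^n \log H_w(\zeta_n^{\,j}) \to 0$ uniformly for $|w|\ge 1$.

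Next I would record the analytic properties of $H_w$. By Carath\'eodory's theorem $\varphi_C$ extends to a homeomorphism of $\overline{\{|u|>1\}}$ onto $\overline{C_{\infty}}$, hence is injective on $\{|u|\ge 1\}$; the apparent singularity of $H_w$ at $u=w$ is removable with value $\varphi_C'(w)/c_C$, nonzero by hypothesis for $|w|=1$ and by conformality for $|w|>1$; and $\varphi_C(u)\sim c_C u$ near $\infty$ gives $H_w(\infty)=1$. Thus $H_w$ is analytic and non-vanishing on $\{|u|\ge 1\}$ (the hypothesis that $\varphi_C'$ has non-vanishing boundary values of bounded variation forces $\varphi_C\in C^1$ up to $\{|u|=1\}$, guaranteeing the boundary behaviour). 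Consequently $\log H_w$ is a well-defined analytic function on $\{|u|\ge 1\}$ with expansion $\log H_w(u)=\sum_{m\ge 1}b_m(w)\,u^{-m}$ having \emph{vanishing constant term}, precisely because $\log H_w(\infty)=0$. This is the crux: substituting $u=\zeta_n^{\,j}$, summing over $j$, and using $\sum_{j=1}^n \zeta_n^{-mj} = n\cdot\mathbf{1}\{\,n\mid m\,\}$, a nonzero constant term $b_0$ would have contributed an unbounded $n\,b_0$, and its absence leaves the exact identity
$$\sum_{j=1}^n \log H_w(\zeta_n^{\,j}) \;=\; n\sum_{\ell\ge 1} b_{\ell n}(w).$$

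It remains to bound $b_m(w)$ uniformly in $w$. From the expansion $(\log H_w)'(u) = -\sum_{m\ge 1} m\,b_m(w)\,u^{-m-1}$, while directly $(\log H_w)'(u) = \frac{1}{w-u} - \frac{\varphi_C'(u)}{\varphi_C(w)-\varphi_C(u)}$, and I would argue that this last expression has boundary values on $\{|u|=1\}$ whose total variation is bounded by a constant $C$ independent of $w$ (the two singular terms cancel to first order on the diagonal $u=w$, and $\varphi_C'\ne 0$, $\varphi_C'\in BV$ control the remainder there). Since a function of bounded variation $V$ on the circle has Fourier coefficients of modulus at most $V/(2\pi|\mathrm{frequency}|)$, comparing the two expressions gives $|m\,b_m(w)|\le C/(2\pi m)$, hence $|b_m(w)|\le C'/m^2$ uniformly for $|w|\ge 1$. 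Plugging this into the identity above,
$$\Bigl|\log \frac{\omega_{C,n}(z)}{w^n-1}\Bigr| \;\le\; n\sum_{\ell\ge 1}|b_{\ell n}(w)| \;\le\; \frac{C'}{n}\sum_{\ell\ge 1}\frac{1}{\ell^2} \;=\; \frac{C'\pi^2}{6n},$$
which tends to $0$ uniformly for $|w|\ge 1$; exponentiating shows $\omega_{C,n}(z)/(w^n-1)\to 1$ uniformly on $C\sqcup C_{\infty}$.

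I expect the main obstacle to be the uniform bounded‑variation estimate on $(\log H_w)'$ along the boundary circle: one must bound $\varphi_C(w)-\varphi_C(u)$ below by a multiple of $|w-u|$ when $w$ and $u$ approach the same boundary point, using only $\varphi_C'\in BV$ and $\varphi_C'\ne 0$, and check that every constant is independent of $w$. Note that mere continuity of $\varphi_C'$ would already yield $b_m=O(1/m)$ and hence $\sum_j \log H_w(\zeta_n^{\,j})=O(1)$, but \emph{not} $o(1)$; the bounded‑variation hypothesis is used exactly to gain the extra power of $m$ that forces the sum to vanish, which is why Theorem~\ref{t:Curtiss2} needs it while Theorem~\ref{t:Curtiss} does not.
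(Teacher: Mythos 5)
The paper does not prove this statement; it is imported verbatim from \cite{Curtiss}, so there is no internal proof to compare against. Judged on its own terms, your architecture is the right one and is in fact Curtiss's: factor $\omega_{C,n}(z)/(w^n-1)$ into the divided-difference products $H_w(\zeta_n^j)$, observe $\log H_w(\infty)=0$ kills the constant term, and use the aliasing identity $\sum_j \log H_w(\zeta_n^j)=n\sum_{\ell\ge1}b_{\ell n}(w)$ so that everything reduces to coefficient decay (this is exactly the ``Riemann sums'' viewpoint of the cited paper). The algebra, the removability of the singularity at $u=w$, and the Fourier-coefficient bound for BV functions are all correct.

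The gap is the step you yourself flag as the main obstacle, and it is not merely unproved --- as stated it is false. You claim $u\mapsto(\log H_w)'(u)=\frac{1}{w-u}-\frac{\varphi_C'(u)}{\varphi_C(w)-\varphi_C(u)}$ has uniformly bounded variation on $|u|=1$. Writing it as $\frac{\varphi_C(w)-\varphi_C(u)-(w-u)\varphi_C'(u)}{(w-u)(\varphi_C(w)-\varphi_C(u))}$, the denominator is comparable to $|w-u|^2$ (this part is fine, using $\varphi_C'\ne0$ and injectivity), but the numerator is a first-order Taylor remainder, which is $O(|w-u|^2)$ only if $\varphi_C'$ is Lipschitz. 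Bounded variation of $\varphi_C'$ gives no such modulus of continuity: it forces $\varphi_C''\in H^1$ and $\varphi_C'$ absolutely continuous, but $\varphi_C'$ can still have, say, a H\"older-$\tfrac12$ cusp (take $\varphi_C(u)=u+\epsilon\psi(u)$ with $\psi''(u)=(u-1)^{-1/2}\in H^1$). Then $(\log H_w)'(u)\sim|w-u|^{-1/2}$ as $u\to w$ on the circle, so it is unbounded, and an unbounded function on $S^1$ is never of bounded variation; a fortiori no uniform-in-$w$ bound exists. So the intermediate statement ``$(\log H_w)'$ is uniformly BV, hence $|b_m(w)|\le C'/m^2$'' cannot carry the proof. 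To repair it one must exploit the hypotheses differently --- e.g.\ separate the few nodes $\zeta_n^j$ nearest to $w/|w|$ and estimate those factors directly (each contributes $O(1)$ to $\log H_w$ individually, and one needs their sum to be $o(1)$), while running the Riemann-sum/variation argument only on the complementary arc, tracking how the variation bound degenerates as the arc approaches $w$. That delicate balancing is the actual content of Curtiss's Theorem and is exactly what is missing here; note also that your final identity $\sum_j\log H_w(\zeta_n^j)=n\sum_\ell b_{\ell n}$ itself presupposes $\sum_m|b_m(w)|<\infty$, so it cannot be invoked before some summability of the coefficients is independently established.
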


The reason for choosing the curve $C_A$ as we did was to use Curtiss' theorems to guarantee that the polynomials $\omega_{A,n}$ converge uniformly not only on bounded subsets of $O_A$ (as in Theorem \ref{t:Curtiss}) but also on unbounded compact subsets of $O_A \subset \hat{\mathbb{C}}$.  Lemma \ref{l:CAproperties} together with Theorems \ref{t:Curtiss} and \ref{t:Curtiss2} immediately yield:

\begin{corollary} \label{c:omegaBehavior}
For any $A \in \mathcal{A}$ and any compact set $I^{\prime} \subset I_A$  
$$\lim_{n \rightarrow \infty} \omega_{A,n}(z) = -1$$ uniformly for $z$ on $I^{\prime}$ and
$$ \lim_{n \rightarrow \infty} \frac{\omega_{A,n}(z)}{w^n -1} = 1, \ \ \  \textrm{where } z=\varphi_A(w)$$ uniformly for $z$ on $C_A \sqcup O_A$.
\end{corollary}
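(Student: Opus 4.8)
The plan is to verify that the hypotheses of Curtiss' two theorems are met by the curve $C := C_A$ and then simply read off the conclusions. The first step is to recognize, via the remark preceding Theorem~\ref{t:Curtiss}, that the polynomials $\omega_{A,n}$ are exactly the fundamental polynomials of Lagrange interpolation $\omega_{C,n}$ attached to the Jordan curve $C_A$: they are built from the exterior conformal map $\varphi_{C_A}$, which (restricted to $\{|w| \geq 1\}$) coincides with our $\varphi_A$ and is normalized so that the points at infinity correspond, together with the interpolation nodes $r_A^{(k,n)} = \varphi_A(e^{k2\pi i/n})$, the images of the $n$-th roots of unity. In particular the symbol ``$w$'' in the corollary's second limit is the same ``$w$'' as in $(\ref{eq:1.3})$, namely $z = \varphi_A(w)$ with $|w| \geq 1$.

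For the first limit I would invoke part (ii) of Lemma~\ref{l:CAproperties}: $C_A$ is a rectifiable Jordan curve, so Theorem~\ref{t:Curtiss} applies, and its conclusion $(\ref{eq:1.2})$ gives $\omega_{A,n} \to -1$ uniformly on every closed subset of the region interior to $C_A$. Since $I_A$ is by definition the bounded connected component of $\hat{\mathbb{C}} \setminus C_A$ --- that is, precisely the interior region --- any compact set $I' \subset I_A$ is such a closed subset, so $\omega_{A,n}(z) \to -1$ uniformly on $I'$, as claimed.

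For the second limit, Theorem~\ref{t:Curtiss} by itself only yields uniform convergence of $\omega_{A,n}(z)/(w^n - 1)$ on closed subsets of the exterior region, not up to $C_A$ itself nor up to $\infty$. To upgrade this to uniformity on the full closed region $C_A \sqcup O_A$ --- note that our $O_A$ is exactly Curtiss' ``$C_\infty$'' --- I would apply Theorem~\ref{t:Curtiss2}, whose sole hypothesis is that $\varphi_{C_A}'$ be non-vanishing and of bounded variation on $\{|w| = 1\}$; this is exactly part (iii) of Lemma~\ref{l:CAproperties}. This gives $\omega_{A,n}(z)/(w^n - 1) \to 1$ uniformly for $z \in C_A \sqcup O_A$, completing the proof.

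There is essentially no genuine obstacle here beyond bookkeeping, which is why the text says the cited results ``immediately yield'' the corollary. The only points that warrant a moment's care are (a) matching normalizations, so that Curtiss' $\varphi_C$, $\omega_{C,n}$, and ``$w$'' are identified with our $\varphi_A$, $\omega_{A,n}$, and the $w$ satisfying $z = \varphi_A(w)$; and (b) matching topological vocabulary, identifying ``the region interior to $C_A$'' with $I_A$ and ``$C \sqcup C_\infty$'' with $C_A \sqcup O_A$. Once these identifications are in place the statement is a direct transcription of Theorems~\ref{t:Curtiss} and~\ref{t:Curtiss2}.
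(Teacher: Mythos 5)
Your proposal is correct and matches the paper's argument exactly: the paper likewise obtains the corollary by citing Lemma \ref{l:CAproperties} to verify the hypotheses of Theorems \ref{t:Curtiss} and \ref{t:Curtiss2} for the curve $C_A$, with the same identifications of $I_A$ with the interior region and $C_A \sqcup O_A$ with $C \sqcup C_\infty$. You have simply made explicit the bookkeeping the paper leaves to the reader.
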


\begin{lemma} \label{l:lemma1}
Fix a real number $r >0$ and any $A \in \mathcal{A}$ such that $0 \in A_0$.  Then there exists a real number $K \geq 1$ such that for any real number $\kappa > K$ there exists $N \in \mathbb{N}$ such that $n \geq N$ implies 
\begin{enumerate}
\item $|P_{A,n}(z)| < r$ for all $z \in A_0$,
\item $|P_{A,n}(z)| > \kappa \cdot |z|$ for all $z \in A_{\infty}$, and 
\item $\kappa \cdot |z| > \sup\{|x| : x \in \hat{\mathbb{C}} \setminus A_{\infty} \}$ for all $z \in A_{\infty}$.
\end{enumerate}
\end{lemma}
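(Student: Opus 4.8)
The strategy is to exploit Corollary~\ref{c:omegaBehavior}, which controls the interpolation polynomials $\omega_{A,n}$ on the three pieces of the decomposition $\hat{\mathbb{C}} = O_A \sqcup C_A \sqcup I_A$, and to transfer this control to $P_{A,n}(z) = z(\omega_{A,n}(z)+1)$ on the pieces of the decomposition $\hat{\mathbb{C}} = A_\infty \sqcup A \sqcup A_0$. The key geometric facts linking the two decompositions are that $C_A$ lies in the interior of $A$ and is an essential core curve, so $\bar A_0 \subset I_A$ and $\bar A_\infty \subset O_A \cup \{\infty\}$; in particular $A_0$ is a compact subset of $I_A$ (using $0 \in A_0$, so $A_0$ is bounded away from $\infty$), and the closure $\overline{\hat{\mathbb{C}}\setminus A_\infty} = \bar A_0 \sqcup A \sqcup \ldots$ — more usefully, $\hat{\mathbb{C}}\setminus A_\infty$ is a compact subset of $\mathbb{C}$ on which $\omega_{A,n}+1$ is uniformly controlled, and $\bar A_\infty$ (including $\infty$) is a compact subset of $C_A \sqcup O_A$.

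First I would prove (i): since $\bar A_0$ is a compact subset of $I_A$, Corollary~\ref{c:omegaBehavior} gives $\omega_{A,n} \to -1$ uniformly on $\bar A_0$, so $|\omega_{A,n}(z)+1| \to 0$ uniformly there; since $\bar A_0$ is also bounded (as $0 \in A_0$ and $A_0$ is a topological disk missing $\infty$), $|z|$ is bounded on $A_0$ by some constant $M_0$, hence $|P_{A,n}(z)| \le M_0 |\omega_{A,n}(z)+1| \to 0$ uniformly, which is $< r$ for $n$ large. Next, for (ii) and (iii), set $K := \sup\{|x| : x \in \hat{\mathbb{C}}\setminus A_\infty\}$, which is finite and $\ge 1$ after possibly enlarging (this is a fixed finite number depending only on $A$); I claim any $\kappa > K$ works. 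Statement (iii) is then immediate since $z \in A_\infty$ implies $|z| \ge \inf\{|w| : w \in A_\infty\}$... — more carefully: every point of $A_\infty$ lies outside the disk of radius $K$ (because that disk is contained in $\hat{\mathbb{C}}\setminus A_\infty$, as $A_\infty$ is connected, contains $\infty$, and is disjoint from the compact set $\hat{\mathbb{C}}\setminus A_\infty \supset \{|x| \le K\}$... I need $\{|x|\le K\} \subset \hat{\mathbb{C}}\setminus A_\infty$, which holds by definition of $K$ as the sup of $|x|$ over that set, provided that set is "full" — i.e. contains the disk it spans; this follows because $\hat{\mathbb{C}}\setminus A_\infty = \bar A \cup A_0$ is the complement of a disk-like region around $\infty$). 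Thus $|z| > K$ for $z \in A_\infty$, giving $\kappa|z| > \kappa \cdot (\text{something} \ge \ldots)$ — actually $\kappa |z| > K \ge \sup\{|x| : x \in \hat{\mathbb{C}}\setminus A_\infty\}$ directly since $\kappa > K \ge 1$ is false in general; the correct bound is $\kappa|z| \ge \kappa \cdot (\text{radius exceeded by } A_\infty)$, so I would instead define $K$ via this outer radius and take the max with the needed supremum, yielding (iii).

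For (ii), the heart of the matter: for $z \in A_\infty$, write $z = \varphi_A(w)$ with $|w| > 1$ (valid since $A_\infty \subset O_A$ and $\varphi_A$ maps $\{|w|>1\}$ onto $O_A$). By Corollary~\ref{c:omegaBehavior}, $\omega_{A,n}(z)/(w^n-1) \to 1$ uniformly on $\bar A_\infty \subset C_A \sqcup O_A$, so $|\omega_{A,n}(z)+1| = |w^n - 1|\cdot|1 + o(1)| + O(1) \ge \tfrac12 |w^n| - O(1) = \tfrac12|w|^n - O(1)$ for $n$ large, uniformly. Since $\varphi_A(w) = c_A w + c_{(A,0)} + O(1/w)$, we have $|z| = |\varphi_A(w)| \le |c_A||w| + C_1$ for $|w|$ large, and $|w| \ge (|z| - C_1)/|c_A|$; combining, $|P_{A,n}(z)| = |z|\,|\omega_{A,n}(z)+1| \ge |z|(\tfrac12|w|^n - C_2)$. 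The main obstacle — and the step requiring real care — is handling $w$ near the unit circle, where $|w|^n$ does not grow: there $|z|$ is bounded (it lies near $C_A$, a bounded set), and I must check $|P_{A,n}(z)| > \kappa|z|$ still holds, i.e. $|\omega_{A,n}(z)+1| > \kappa$ there. This is where the uniform convergence on $C_A \sqcup O_A$ (Theorem~\ref{t:Curtiss2}, via Lemma~\ref{l:CAproperties}) is essential but possibly insufficient near $|w|=1$ where $|w^n-1|$ can be small; I expect one must instead use a two-region argument — on $\{|w| \ge 1+\delta\} \cap A_\infty$ use the growth of $|w|^n$, and on the "collar" $\{1 < |w| < 1+\delta\} \cap A_\infty$ use that this collar is a compact subset of $I_A \cup C_A \cup (\text{small part of } O_A)$... no — rather, observe $A_\infty$ is a \emph{closed} distance from $C_A$, so in fact $\inf\{|w| : \varphi_A(w) \in A_\infty\} = 1 + \delta_0 > 1$ for some $\delta_0 > 0$ (since $\bar A_\infty$ is compact in $O_A$ and disjoint from $C_A = \varphi_A(S^1)$), whence $|w| \ge 1+\delta_0$ throughout $A_\infty$ and $|w|^n \to \infty$ uniformly. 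This observation removes the obstacle: it reduces (ii) to the single estimate $|P_{A,n}(z)| \ge |z| \cdot (\tfrac12 (1+\delta_0)^n - C_2) > \kappa |z|$ for $n$ large, and simultaneously dictates the correct choice of $K$. I would finish by choosing $N$ large enough that all three uniform estimates hold for $n \ge N$, after fixing $\kappa > K$.
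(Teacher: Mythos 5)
Your proposal is correct and follows essentially the same route as the paper's proof: uniform convergence of $\omega_{A,n}$ to $-1$ on the compact set $\overline{A_0} \subset I_A$ gives (i), the observation that $\inf\{|w| : \varphi_A(w) \in A_{\infty}\} > 1$ (the paper's $\delta$) makes $|w|^n$, and hence $|\omega_{A,n}+1|$, blow up uniformly on $A_{\infty}$ for (ii), and $K$ is built from the ratio of $\beta = \sup\{|x| : x \in \hat{\mathbb{C}} \setminus A_{\infty}\}$ to $\gamma = \inf\{|x| : x \in A_{\infty}\}$ for (iii). Just note when writing it up that your first attempt at (iii) — taking $K=\beta$ and asserting that the round disk of radius $K$ lies in $\hat{\mathbb{C}}\setminus A_{\infty}$ — is false for non-round annuli, but the correction you arrive at, $K=\max\{1,\beta/\gamma\}$ so that $\kappa|z|\geq\kappa\gamma>\beta$, is exactly the paper's choice.
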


\begin{proof}

Without loss of generality, we will assume that $r$ is small enough that  $B_r(0) \subset A_0$, where $B_r(0)$ denotes the open ball of radius $r$ about $0$.  Set 
$$\begin{array}{lcl}
\alpha &=& \sup \{|x|:x \in A_0\},  \\
\beta &=& \sup \{|x| : x \in A_0 \cup A \} \\
\gamma &=& \inf \{|x| : x \in A_{\infty} \} \\
\delta &=& \inf \{|x| : x \in \varphi_A^{-1}(A_{\infty}) \}.
 \end{array}$$
Notice that $\alpha$, $\beta$, and $\gamma$ are all at least $r$, and, in particular, are nonzero.  Notice also that $\delta > 1$. This is because the distance function from a point in the unit circle to the set $\varphi_A^{-1}(\partial A_{\infty}) \subset \{w:|w|>1\}$ is continuous on the unit circle, and hence attains its minimum, which is nonzero. Hence $\inf \{|x| : x \in \varphi_A^{-1}(A_{\infty}) \} > 1$.

Let $K = \max\{1, \frac{\beta}{\gamma}\}$.  Fix $\kappa > K$; then $\kappa>1$ and $\kappa \cdot \gamma > \beta$.  Pick $\epsilon_2 > 0$ such that $\epsilon_2 < r/\alpha$. Since $$\lim_{n\rightarrow \infty} \frac{\omega_{A,n}(z)}{(\varphi_A^{-1}(z))^n -1}=1$$ uniformly for $z \in A_{\infty}$ by Corollary \ref{c:omegaBehavior}, and $|\varphi_A^{-1}(z)| > 1$ on $ A_{\infty}$, we also have that  $$\lim_{n\rightarrow \infty} \frac{\omega_{A,n}(z)+1}{(\varphi_A^{-1}(z))^n -1}=1$$ uniformly on $A_{\infty}$.  Hence there exists $N_1 \in \mathbb{N}$ such that $n \geq N_1$ implies $$|\omega_{A,n}(z) +1| > (1-\epsilon_2) \cdot |w^n-1|$$ for all $z \in A_{\infty}$, with $z = \varphi_A(w)$.  
  Pick $N_2 \in \mathbb{N}$ such that $n \geq N_2$ implies $$\delta^n> 1 + \frac{\kappa}{1-\epsilon_2}.$$
  Since $\omega_{A,n}(z)$ converges uniformly to $-1$ on $A_0$ by Corollary \ref{c:omegaBehavior}, there exists $N_3 \in \mathbb{N}$ such that $n \geq N_3$ implies $|\omega_{A,n}(z) + 1| < \epsilon_2$ for all $z \in A_0$.   Let $N = \max \{N_1,N_2,N_3\}$. 
  
  Now consider $P_{A,n}$ for $n \geq N$.  For $z \in A_0$, $$|P_{A,n}(z)| = |z| \cdot |\omega_{A,n}(z)+1| \leq \alpha \cdot \epsilon_2 < r.$$ 
  For $z \in A_{\infty}$, 
  $$|P_{A,n}(z)| = |z| \cdot |\omega_{A,n}(z) +1| \geq |z| \cdot (1-\epsilon_2) \cdot |w^n+1| \geq |z| \cdot (1-\epsilon_2) \cdot | |w|^n - 1|  $$
  $$ \geq  |z| \cdot (1-\epsilon_2) \cdot (\delta^n -1) > |z| \cdot (1-\epsilon_2)(\frac{\kappa}{1-\epsilon_2}) = |z| \cdot \kappa$$
\end{proof}

\begin{theorem} \label{t:JuliaFitInAnnulus}
Fix $A \in \mathcal{A}$ and any point $t \in A_0$.  Let $\tau:\hat{\mathbb{C}} \rightarrow \hat{\mathbb{C}}$ denote the translation $z \mapsto z - t$.   Then there exists $N \in \mathbb{N}$ such that $n \geq N$ implies

\begin{enumerate}
\item $A_0 \subset \mathcal{K}(\tau^{-1} \circ P_{\tau(A),n} \circ \tau)$, 
\item $A_{\infty} \subset \mathcal{L}(\tau^{-1} \circ P_{\tau(A),n} \circ \tau)$,
\item $\mathcal{J}(\tau^{-1} \circ P_{\tau(A),n} \circ \tau ) \subset A$, and 
\item any path from a point in $\partial A_{\infty}$ to a point in $\partial A_0$ has nonempty intersection with $\mathcal{J}(\tau^{-1} \circ P_{\tau(A),n} \circ \tau)$. 
\end{enumerate}
\end{theorem}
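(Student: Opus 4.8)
The plan is to conjugate away the translation and then read all four assertions off Lemma~\ref{l:lemma1}. Set $B=\tau(A)\in\mathcal A$, so $0=\tau(t)\in B_0$, and write $P=P_{B,n}$ and $Q_n=\tau^{-1}\circ P\circ\tau$. Because $\tau$ is a homeomorphism of $\hat{\mathbb C}$ fixing $\infty$ and conjugates $Q_n$ to $P$, it carries $\mathcal L(P),\mathcal K(P),\mathcal J(P)$ onto $\mathcal L(Q_n),\mathcal K(Q_n),\mathcal J(Q_n)$ under $\tau^{-1}$, carries $B_0,B_\infty,\partial B_0,\partial B_\infty$ onto $A_0,A_\infty,\partial A_0,\partial A_\infty$, and takes paths to paths; hence it suffices to prove the four statements with $P$ in place of $Q_n$ and $B$ in place of $A$. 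To invoke Lemma~\ref{l:lemma1}, I would fix $r>0$ small enough that $B_r(0)\subset B_0$, take the constant $K$ it produces for this $r$ and this $B$, choose any $\kappa>K$ (so $\kappa>1$), let $N$ be the resulting threshold (enlarged if necessary so $N\ge1$), and work with $n\ge N$; then $P$ has degree $n+1\ge2$, so $\infty$ is a superattracting fixed point, $\mathcal L(P)$ is open, $\mathcal K(P)$ is closed, and $\mathcal J(P)=\mathcal K(P)\setminus\operatorname{int}\mathcal K(P)$ is disjoint from both $\mathcal L(P)$ and $\operatorname{int}\mathcal K(P)$.

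For statement~(i): Lemma~\ref{l:lemma1}(1) gives $P(B_0)\subset B_r(0)\subset B_0$, so $B_0$ is forward invariant and bounded, hence contained in $\mathcal K(P)$, and being open it lies in $\operatorname{int}\mathcal K(P)$. For statement~(ii): Lemma~\ref{l:lemma1}(2)--(3) together force $|P(z)|>\sup\{|x|:x\in\hat{\mathbb C}\setminus B_\infty\}$ for $z\in B_\infty$, so $P(B_\infty)\subset B_\infty$; iterating $|P(z)|>\kappa|z|$ along an orbit and using $|z|\ge\inf\{|x|:x\in B_\infty\setminus\{\infty\}\}\ge r>0$ gives $|P^k(z)|>\kappa^k r\to\infty$, so $P^k(z)\to\infty$ and $z\in\mathcal L(P)$; together with $\infty\in\mathcal L(P)$ this gives $B_\infty\subset\mathcal L(P)$. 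Statement~(iii) is then immediate from $\hat{\mathbb C}=B_\infty\sqcup B\sqcup B_0$, since $\mathcal J(P)$ is disjoint from $\operatorname{int}\mathcal K(P)\supset B_0$ and from $\mathcal L(P)\supset B_\infty$.

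Statement~(iv) is the only step requiring anything beyond Lemma~\ref{l:lemma1}, and it is a brief point-set topology argument: with $U=\mathcal L(P)$ and $W=\operatorname{int}\mathcal K(P)$ disjoint open sets whose union is $\hat{\mathbb C}\setminus\mathcal J(P)$, note $\partial B_\infty\subset\overline{B_\infty}\subset\overline U$ and $\partial B_0\subset\overline{B_0}\subset\overline W$ by (i)--(ii); a path $\rho:[0,1]\to\hat{\mathbb C}$ from $\partial B_\infty$ to $\partial B_0$ that avoided $\mathcal J(P)$ would have connected image meeting both $\overline U$ and $\overline W$ while lying in $U\cup W$, forcing $\rho([0,1])\cap U$ and $\rho([0,1])\cap W$ to be two nonempty relatively open sets partitioning a connected set --- impossible. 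Applying $\tau^{-1}$ to all four conclusions via the conjugacy identities above then recovers the theorem. I do not expect a genuine obstacle here: the analytic content is entirely in Lemma~\ref{l:lemma1} (hence in Curtiss' theorems), and what is left is the conjugation reduction, the two invariance/escape estimates, and the connectedness argument for~(iv), the last being the only place where one must keep careful track of which closures and interiors are in play.
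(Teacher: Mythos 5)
Your proposal is correct and follows essentially the same route as the paper's proof: reduce to the case $0\in A_0$ by conjugating with the translation, apply Lemma \ref{l:lemma1} to get forward invariance of $A_0$ and the escape estimate on $A_{\infty}$, and deduce (iii) and (iv) from the decomposition $\hat{\mathbb{C}}=A_{\infty}\sqcup A\sqcup A_0$ together with a connectedness argument. Your treatment of (iv) is somewhat more explicit about closures and interiors than the paper's one-line version, but it is the same idea.
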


\begin{proof}
To start, we will consider only the case that $0 \in A_0$ and $\tau$ is the identity map.   So pick $r>0$ such that $B_r(0) \subset A_0$.  Pick a real number $\kappa >1$ and $N \in \mathbb{N}$ as in the statement of Lemma \ref{l:lemma1}.  Let $n \geq N$, $n \in \mathbb{N}$.  

Since $A_0$ contains the disk $B_r(0)$, the property  $|P_{A,n}(z)| < r$ for all $ z \in A_0$ implies $A_0 \subset \mathcal{K}(P_{A,n})$.  The property $$|P_{A,n}(z)| >  \kappa \cdot |z| >  \sup \{|w| : w \in \hat{\mathbb{C}} \setminus A_{\infty} \}$$  for all  $z \in A_{\infty}$ implies that $P_{A,n}(A_{\infty}) \subset A_{\infty}$, and thus that $|P_{A,n}^{m}(z)| > \kappa^m \cdot |z|$ for all $m \in \mathbb{N}$ and $z \in A_{\infty}$.  Hence, $A_{\infty} \subset \mathcal{L}(P_{A,n})$.  

The partition of the Riemann sphere as $\hat{\mathbb{C}} = A_{\infty} \sqcup A \sqcup A_0$ together with the facts $A_0 \subset \mathcal{K}(P_{A,n})$ and  $A_{\infty} \subset \mathcal{L}(P_{A,n})$ imply that $\partial \mathcal{K}(P_{A,n}) \subset A$.  Any path connecting a point in $\mathcal{K}(P_{A,n})$ and a point in $(\mathcal{K}(P_{A,n}))^c = \mathcal{L}(P_{A,n})$ must have nonempty intersection with $\partial \mathcal{K}(P_{A,n})$.  This proves the theorem in the case that $0 \in A_0$ and $\tau$ is the identity.    

Now consider the general case, in which $\tau$ is any fixed map $z \mapsto z-t$ for some $t \in A_0$.  The translated annulus $\tau(A)$ is an element of $\mathcal{A}$ and its interior region, $\tau(A)_0$, contains the point $0$.  Hence the conclusion of the theorem holds for the annulus $\tau(A)$ and the identity translation.  The composition $\tau^{-1}\circ P_{\tau(A),n} \circ \tau$ is a polynomial and $\mathcal{K}(\tau^{-1}\circ P_{\tau(A),n} \circ \tau) = \tau^{-1}(\mathcal{K}(P_{\tau(A),n}))$.   Therefore $\mathcal{K}(\tau^{-1} \circ P_{\tau(A),n} \circ \tau) \supset \tau^{-1}(\tau(A)_0) = A_0$; statements (ii), (iii) and (iv) follow similarly. 

\end{proof}

\section{Hausdorff distance and annular approximations}

  \begin{figure}[!h] 
  \centering
  \includegraphics[width=\linewidth]{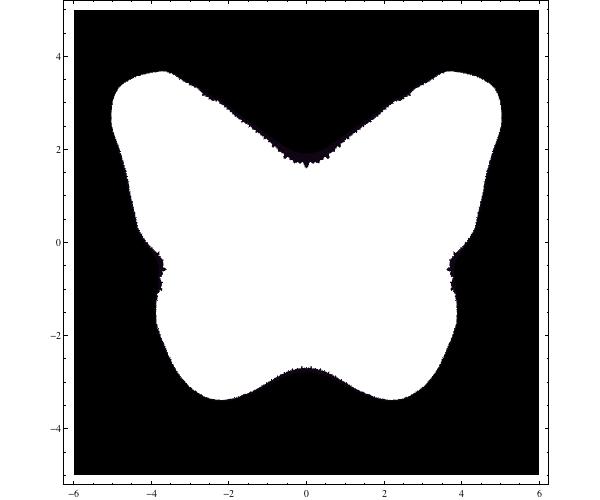}
  \caption[]{A filled Julia set in the shape of a butterfly.  The polynomial which generates this Julia set has degree 301.
  }
  \end{figure}

The purpose of this section is to carry out the technical arguments necessary to reformulate the conclusion of Theorem \ref{t:JuliaFitInAnnulus} in terms of the Hausdorff topology and Jordan curves (Theorem \ref{t:HausdorffVersion}).

 For any two non-empty subsets $X$ and $Y$ of $\mathbb{C}$, we denote by $d_H(X,Y)$ the Hausdorff distance between $X$ and $Y$, defined by $$d_H(X,Y) = \inf \{\epsilon > 0 \mid X \subseteq N_{\epsilon}(Y) \textrm{ and } Y \subseteq N_{\epsilon}(X)\},$$ where $$N_{\epsilon}(X) = \bigcup_{x \in X} \{z \in \hat{\mathbb{C}} \mid d(z,x) \leq \epsilon\}.$$  We will also consider the Hausdorff distance $d_H(X,Y)$ in the case where $X,Y \subset \hat{\mathbb{C}}$ and both $X$ and $Y$ contain an open ball, say $B$, about the point $\infty$.  In this case, we define $d_H(X,Y)$ to be the Hausdorff distance $d_H(X \setminus B, Y \setminus B)$, which is well-defined and finite.

We will denote by $\mathcal{C}$ the collection of Jordan curves in the complex plane $\mathbb{C} \subset \hat{\mathbb{C}}$.  The Jordan-Schoenflies Theorem says that for each Jordan curve $J \in \mathcal{C}$, there exists a homeomorphism $f_J:\mathbb{C} \rightarrow \mathbb{C}$ such that $f_J(J) = S^1$ and such that $f$ maps the bounded component of $J^c$ to the unit disk.  For each real number $\epsilon$ with $1 > \epsilon>0$, define 
$$J^{\epsilon} = f_J^{-1}(\{z \in \mathbb{C} \mid 1-\epsilon \leq  |z| \leq 1+\epsilon \}).$$  
The set  $\hat{\mathbb{C}} \setminus J^{\epsilon}$ consists of two connected components; let $J^{\epsilon}_0$ denote the bounded component and let $J^{\epsilon}_{\infty}$ denote the unbounded component.  Thus for each $J \in \mathcal{C}$ and $\epsilon > 0$ we have a decomposition $\hat{\mathbb{C}} = J^{\epsilon}_0 \sqcup J^{\epsilon} \sqcup J^{\epsilon}_{\infty}$.  

\begin{lemma} \label{l:approxbyannuli}
Fix a Jordan curve $J \in \mathcal{C}$ and a real number $\delta >0$.  Then there exists a real number $\rho > 0$ such that $\rho > \epsilon > 0$ implies 
\begin{enumerate}
\item $J^{\epsilon} \subset N_{\delta}(J)$, 
\item For every point $j \in J$, $B_{\delta}(j) \cap J^{\epsilon}_0 \not = \emptyset$ and $B_{\delta}(j) \cap J^{\epsilon}_{\infty} \not = \emptyset$.
\end{enumerate}
\end{lemma}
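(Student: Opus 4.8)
The plan is to pull everything back to the round annulus via the Jordan--Schoenflies homeomorphism $f_J$, where the relevant sets become genuine round annuli and disks, to prove the two estimates there, and to push them forward using uniform continuity of $f_J^{-1}$ on a fixed compact annular neighbourhood of the unit circle.

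First I would fix notation on the model side: write $D_\epsilon=\{w:|w|<1-\epsilon\}$, $A_\epsilon=\{w:1-\epsilon\le|w|\le 1+\epsilon\}$ and $E_\epsilon=\{w:|w|>1+\epsilon\}$, so that $J=f_J^{-1}(S^1)$ and $J^\epsilon=f_J^{-1}(A_\epsilon)$ directly from the definitions. Then I would record that a homeomorphism of $\mathbb{C}$ is proper --- the preimage of a compact set equals its image under the continuous map $f_J^{-1}$, hence is compact --- so $f_J^{-1}(D_\epsilon)$ is bounded and $f_J^{-1}(E_\epsilon)$ is unbounded. Since $\mathbb{C}\setminus J^\epsilon=f_J^{-1}(D_\epsilon)\sqcup f_J^{-1}(E_\epsilon)$ is a disjoint union of two nonempty open connected sets, these are its two connected components, and adjoining $\infty$ to the unbounded one identifies $J^\epsilon_0=f_J^{-1}(D_\epsilon)$ and $J^\epsilon_\infty=f_J^{-1}(E_\epsilon)\cup\{\infty\}$.

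Next, given $\delta>0$, I would set $K=\{w:\tfrac12\le|w|\le\tfrac32\}$, which is compact, so $f_J^{-1}$ is uniformly continuous on $K$; choose $\eta\in(0,\tfrac14)$ so that $w,w'\in K$ with $|w-w'|<\eta$ imply $|f_J^{-1}(w)-f_J^{-1}(w')|<\delta$, and put $\rho=\eta/2$. Fix $\epsilon\in(0,\rho)$. For statement (i): any $x\in J^\epsilon$ has $w:=f_J(x)\in A_\epsilon\subset K$, and its radial projection $w':=w/|w|$ lies on $S^1\subset K$ with $|w-w'|=\bigl||w|-1\bigr|\le\epsilon<\eta$; hence $x=f_J^{-1}(w)$ is within $\delta$ of $f_J^{-1}(w')\in J$, so $x\in N_\delta(J)$, proving $J^\epsilon\subset N_\delta(J)$. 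For statement (ii): fix $j\in J$, let $v:=f_J(j)\in S^1$, and consider $w_0:=(1-2\epsilon)v$ and $w_\infty:=(1+2\epsilon)v$; since $\epsilon<\tfrac14$ both lie in $K$, while $|w_0|=1-2\epsilon<1-\epsilon$ and $|w_\infty|=1+2\epsilon>1+\epsilon$ put $f_J^{-1}(w_0)\in J^\epsilon_0$ and $f_J^{-1}(w_\infty)\in J^\epsilon_\infty$ by the dictionary above. As $|w_0-v|=|w_\infty-v|=2\epsilon<\eta$, both preimages lie within $\delta$ of $f_J^{-1}(v)=j$, hence in $B_\delta(j)$; this gives $B_\delta(j)\cap J^\epsilon_0\ne\emptyset$ and $B_\delta(j)\cap J^\epsilon_\infty\ne\emptyset$.

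The step demanding the most care is uniformity --- a single $\rho$ must serve every $x\in J^\epsilon$ in (i) and every $j\in J$ in (ii) --- and this is exactly what uniform continuity of $f_J^{-1}$ on the compact annulus $K$ provides; with that tool in hand there is no essential difficulty, though a subsidiary point worth spelling out carefully in the write-up is the identification of $J^\epsilon_0$ and $J^\epsilon_\infty$ with the stated $f_J^{-1}$-images, which relies on the properness of $f_J$ noted above. (A contradiction-and-subsequence argument, using that the sets $J^\epsilon$, $\epsilon\le\tfrac12$, all lie in the fixed compact set $f_J^{-1}(K)$, would give an alternative but less clean route.)
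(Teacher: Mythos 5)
Your proof is correct, but it takes a genuinely different route from the paper's. The paper works pointwise: it defines two auxiliary radius functions, $w_{\delta}(s)=\sup\{r>0\mid B_r(s)\subset f_J(N_{\delta}(J))\}$ on $S^1$ for part (i) and $y_{\delta}(j)=\sup\{\epsilon>0\mid B_{\delta}(j)\cap J^{\epsilon}_{\infty}\neq\emptyset \text{ and } B_{\delta}(j)\cap J^{\epsilon}_0\neq\emptyset\}$ on $J$ for part (ii), asserts that each is positive and continuous, and extracts the uniform $\rho$ as a minimum over a compact set. You instead transfer everything to the round model and invoke uniform continuity of $f_J^{-1}$ on the compact annulus $K=\{\tfrac12\le|w|\le\tfrac32\}$, producing explicit witnesses (radial projection to $S^1$ for (i), the radially pushed points $(1\mp 2\epsilon)v$ for (ii)). Your version has the advantage of being fully self-contained at the one place where the paper is thinnest: the continuity of $y_{\delta}$ is asserted without argument and is not obvious, since the sets $J^{\epsilon}_0,J^{\epsilon}_{\infty}$ themselves vary with $\epsilon$; your argument needs no such claim. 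You also supply the identification $J^{\epsilon}_0=f_J^{-1}(D_{\epsilon})$ and $J^{\epsilon}_{\infty}=f_J^{-1}(E_{\epsilon})\cup\{\infty\}$ via properness of the plane homeomorphism, which the paper uses implicitly; this is worth spelling out exactly as you do. The only cost of your approach is that it leans on the specific product structure of the model annulus, whereas the paper's compactness argument is formulated intrinsically on $J$ --- but since $J^{\epsilon}$ is defined through $f_J$ anyway, nothing is lost.
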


\begin{proof}
Define a function $w_{\delta}:S^1 \rightarrow \mathbb{R}$ by 
$$w_{\delta}(s) = \sup \{r>0 \mid B_r(s) \subset f_J(N_{\delta}(J))\}.$$
Since $f_J(N_{\delta}(J))$ is an open set which contains $S^1$,  $w_{\delta}(s)$ is well-defined and positive for each $s \in S^1$.  Since $w_{\delta}$ is continuous on $S^1$ and $S^1$ is compact, there exists a real number $ \rho_1 > 0$ such that $w_{\delta}(s) \geq \rho_1$ for all $s \in S^1$.  Then for any real number $\epsilon$ with $0 < \epsilon < \rho_1$ and for any $s \in S^1$, we have 
$B_{\epsilon}(s) \subset f_J(N_{\delta}(J))$, and thus $$N_{\epsilon}(S^1) = \bigcup_{s \in S^1} B_{\epsilon}(s)  \subset f_J(N_{\delta}(J)).$$ Therefore $0 < \epsilon < \rho_1$ implies $J^{\epsilon} \subset N_{\delta}(J)$.

Define a function $y_{\delta}:J \rightarrow \mathbb{R}$ by 
$$y_{\delta}(j) =  \sup \{\epsilon > 0 \mid B_{\delta}(j) \cap J^{\epsilon}_{\infty} \not = \emptyset \textrm{ and } B_{\delta}(j) \cap J^{\epsilon}_0 \not = \emptyset  \}.$$  Notice that $y_{\delta}(j)$ is well-defined and positive for each point $j \in J$.  Since $y_{\delta}$ is continuous on $J$ and $J$ is compact ($J$ is the image of $S^1$ under $f^{-1}$), there exists a real number $\rho_2 > 0$ such that $y_{\delta}(j) > \rho_2$ for all $j \in J$.  Then $\rho_2 > \epsilon > 0$ implies $B_{\delta}(j) \cap J_{\infty}^{\epsilon} \not = \emptyset$ and $B_{\delta}(j) \cap J_0^{\epsilon} \not = \emptyset$ for all $j \in J$.  

Let $\rho = \min\{\rho_1, \rho_2\}$.  Then $\rho>\epsilon > 0$ implies properties (i) and (ii). 
\end{proof}

\begin{theorem} \label{t:HausdorffVersion}
Let $J$ be a Jordan curve in $\mathbb{C}$ and fix a real number $\delta > 0$.   Denote by $I$ the bounded component of $\hat{\mathbb{C}}\setminus J$ and denote by $O$ the unbounded component of $\hat{\mathbb{C}}\setminus J$.  There exists a polynomial $P:\hat{\mathbb{C}} \rightarrow \hat{\mathbb{C}}$ such that 
\begin{enumerate}
\item $d_H(I, \mathcal{K}(P)) < \delta$,
\item $d_H(J, \mathcal{J}(P)) < \delta$, and 
\item $d_H(O, \mathcal{L}(P)) < \delta$. 
\end{enumerate}
\end{theorem}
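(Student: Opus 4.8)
The plan is to deduce Theorem~\ref{t:HausdorffVersion} from Theorem~\ref{t:JuliaFitInAnnulus} by first thickening the Jordan curve $J$ into an annulus that is thin enough to be $\delta$-close to $J$ in all the relevant senses, and then applying the polynomial constructed in the earlier theorem to that annulus. First I would invoke Lemma~\ref{l:approxbyannuli} with the given $\delta$ (or, to be safe, with $\delta/2$) to obtain a radius $\rho>0$ such that for every $0<\epsilon<\rho$ the thickened set $J^{\epsilon}$ satisfies $J^{\epsilon}\subset N_{\delta}(J)$ and, for every $j\in J$, both $B_{\delta}(j)\cap J^{\epsilon}_{0}\neq\emptyset$ and $B_{\delta}(j)\cap J^{\epsilon}_{\infty}\neq\emptyset$. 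Fix one such $\epsilon$; then $A:=J^{\epsilon}$ is the closure of an open bounded annulus, so $A\in\mathcal{A}$, with $A_{0}=J^{\epsilon}_{0}$ and $A_{\infty}=J^{\epsilon}_{\infty}$.

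Next I would pick any point $t\in A_{0}$ and apply Theorem~\ref{t:JuliaFitInAnnulus} to $A$ and $t$, obtaining $N\in\mathbb{N}$ such that for $n\geq N$ the polynomial $P:=\tau^{-1}\circ P_{\tau(A),n}\circ\tau$ satisfies $A_{0}\subset\mathcal{K}(P)$, $A_{\infty}\subset\mathcal{L}(P)$, $\mathcal{J}(P)\subset A$, and every path from $\partial A_{\infty}$ to $\partial A_{0}$ meets $\mathcal{J}(P)$. Fix such an $n$ and let $P$ be this polynomial; I claim it works. From $A_{0}\subset\mathcal{K}(P)\subset\hat{\mathbb{C}}\setminus\mathcal{L}(P)=A_{0}\sqcup A$ we get $A_{0}\subset\mathcal{K}(P)\subset A_{0}\sqcup A$, and since $A\subset N_{\delta}(J)\subset N_{\delta}(\bar I)$ and $A_0 \subset I$, the containment $\mathcal{K}(P)\subset N_{\delta}(I)$ is immediate; conversely $I$ differs from $A_0$ only within the $\delta$-neighborhood of $J$ (because $J^\epsilon \subset N_\delta(J)$ forces $I \setminus A_0 \subset N_\delta(J)$), so $I\subset N_{\delta}(\mathcal{K}(P))$. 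This gives (i). The argument for $O$ and $\mathcal{L}(P)$ is symmetric: $A_{\infty}\subset\mathcal{L}(P)\subset A_{\infty}\sqcup A\subset N_{\delta}(O)$ and $O\setminus A_\infty\subset N_\delta(J)\subset N_\delta(A_\infty)$, giving (iii) (using the convention for $d_H$ when both sets contain a ball about $\infty$).

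For (ii), the containment $\mathcal{J}(P)\subset A\subset N_{\delta}(J)$ is exactly property~(iii) of Theorem~\ref{t:JuliaFitInAnnulus} combined with Lemma~\ref{l:approxbyannuli}(i), so it remains to show $J\subset N_{\delta}(\mathcal{J}(P))$, i.e. every $j\in J$ has a point of $\mathcal{J}(P)$ within distance $\delta$. This is where property~(iv) of Theorem~\ref{t:JuliaFitInAnnulus} and property~(ii) of Lemma~\ref{l:approxbyannuli} combine: given $j\in J$, the ball $B_{\delta}(j)$ contains a point $p_{0}\in A_{0}$ and a point $p_{\infty}\in A_{\infty}$; since $A_0$ and $A$ are open topological disks (hence arcwise connected and their union is connected), I can connect $p_0$ to a point of $\partial A_\infty$ and $p_\infty$ to a point of $\partial A_0$ by short arcs, but more directly I would choose $\delta$ small enough at the outset, or simply observe that any arc inside $B_\delta(j)$ from $p_0$ to $p_\infty$ must cross $\partial A_\infty$ and $\partial A_0$ (since $p_0 \in A_0$, $p_\infty \in A_\infty$, and these are separated by $A$), and hence contains a subarc from $\partial A_\infty$ to $\partial A_0$; by property~(iv) that subarc meets $\mathcal{J}(P)$. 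The one subtlety to handle carefully is that Lemma~\ref{l:approxbyannuli}(ii) only guarantees $B_\delta(j)$ meets $A_0$ and $A_\infty$, not that the straight segment between those two points lies in $B_\delta(j)$; I would fix this either by passing to $\delta/2$ and noting that then a point of $A_0$ within $\delta/2$ of $j$ and a point of $A_\infty$ within $\delta/2$ of $j$ can be joined by a path through $j$'s neighborhood of radius $\delta$ (going via $j$ itself if $j \in A$, which it is since $J \subset \mathrm{int}(J^\epsilon)$), or by using connectedness of $B_\delta(j)$ directly. I expect this last point — producing an honest path from $\partial A_\infty$ to $\partial A_0$ inside $B_\delta(j)$ so as to apply property~(iv) — to be the only real obstacle; everything else is bookkeeping with the two lemmas.
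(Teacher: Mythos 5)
Your proposal is correct and takes essentially the same route as the paper: thicken $J$ to $J^{\epsilon}$ via Lemma~\ref{l:approxbyannuli}, apply Theorem~\ref{t:JuliaFitInAnnulus}, control $\mathcal{K}\bigtriangleup I$ and $\mathcal{L}\bigtriangleup O$ through $J^{\epsilon}$, and for (ii) use exactly the two-segment path $p_1\to j\to p_2$ inside $B_{\delta_1}(j)$ that the paper uses (which needs only that a path from $\mathcal{K}(P)$ to $\mathcal{L}(P)$ meets $\partial\mathcal{K}(P)=\mathcal{J}(P)$, so your worry about extracting a subarc from $\partial A_{\infty}$ to $\partial A_0$ is unnecessary). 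The only adjustment is to run Lemma~\ref{l:approxbyannuli} with a parameter strictly smaller than $\delta/2$ (the paper uses $\delta/3$), since the triangle-inequality chains give bounds of the form $2\delta_1$, and $\delta_1=\delta/2$ yields only $d_H\leq\delta$ rather than the strict inequality.
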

 
\begin{proof}
Pick a real number $\delta_1$ such that $0< \delta_1 < \delta/3$.  Apply Lemma \ref{l:approxbyannuli} using the parameter $\delta_1$ to obtain $\rho>0$ such that $\rho > \epsilon > 0$ implies 
\begin{enumerate}
\item[(a)]  $J^{\epsilon} \subseteq N_{\delta_1}(J)$ and 
\item[(b)]  $B_{\delta_1}(j) \cap J^{\epsilon}_0 \not = \emptyset$ and $B_{\delta_1}(j) \cap J^{\epsilon}_{\infty} \not = \emptyset$ for all $j \in J$.
\end{enumerate}
Fix a real number $\epsilon$ with $\rho > \epsilon >0$.  Apply Theorem \ref{t:JuliaFitInAnnulus} to the annulus $J^{\epsilon} \in \mathcal{A}$  to obtain a polynomial $P_n$ such that 
\begin{itemize}
\item $J^{\epsilon}_0 \subset \mathcal{K}(P_n)$,
\item $J^{\epsilon}_{\infty} \subset \mathcal{L}(P_n)$, 
\item $\mathcal{J}(P_n) \subset J^{\epsilon}$, 
\item any path from a point in $\partial J^{\epsilon}_{\infty}$ to a point in $\partial J^{\epsilon}_0$ has nonempty intersection with $\mathcal{J}(P_n)$.  
\end{itemize}

We know $J^{\epsilon}_0 \subseteq \mathcal{K}(P_n) \subseteq J^{\epsilon} \sqcup J^{\epsilon}_0$ and $J^{\epsilon}_0 \subseteq I \subseteq J^{\epsilon} \sqcup J^{\epsilon}_0$.  Consequently the symmetric difference $\mathcal{K}(P_n) \bigtriangleup I$ is contained in $J^{\epsilon}$.  By (a) any point $x \in J^{\epsilon}$ is within distance at most $\delta_1$ of some point $j \in J$, and by (b), the point $j$ is within distance at most $\delta_1$ of a point in $J_0^{\epsilon} \subset I \cap \mathcal{K}(P_n)$.  Hence $d_H(I,\mathcal{K}(P_n)) \leq 2\delta_1 < \delta.$  

Similarly, $J^{\epsilon}_{\infty} \subseteq \mathcal{L}(P_n) \subseteq J^{\epsilon} \sqcup J^{\epsilon}_{\infty}$ and $J^{\epsilon}_{\infty} \subseteq O \subseteq J^{\epsilon} \sqcup J^{\epsilon}_{\infty}$, implying $\mathcal{L}(P_n) \bigtriangleup O \subset J^{\epsilon}$ and thus $d_H(\mathcal{L}(P_n),O) < \delta$. 

Let $j$ be a point in $J$.  By (b), pick points  $p_1 \in B_{\delta_1}(j) \cap J^{\epsilon}_0$ and $p_2 \in B_{\delta_1}(j) \cap J^{\epsilon}_{\infty}$ and let $\gamma$ be the path consisting of the two line segments $\overline{p_1j}$ and $\overline{p_2j}$.  By construction, the length of $\gamma$ is at most $2\delta_1$.  Since $\gamma$ connects $\mathcal{K}(P_n)$ with $\mathcal{L}(P_n) = \mathcal({K}(P_n))^c$, $\gamma$ contains a point in $\partial(\mathcal{K}(P_n)) = \mathcal{J}(P_n)$.  Hence $d_H(j,\mathcal{J}(P_n)) \leq 2\delta_1$.  The containments 
$$\mathcal{J}(P_n) \subset J^{\epsilon} \subset N_{\delta_1}(J)$$
imply $d_H(a,J) \leq \delta_1$ for any point $a \in \mathcal{J}(P_n)$.  Therefore $$d_H(\mathcal{J}(P_n),J) < \delta.$$
\end{proof}

\section{Rational Maps}

  \begin{figure}[!h] 
  \centering
  \includegraphics[width=\linewidth]{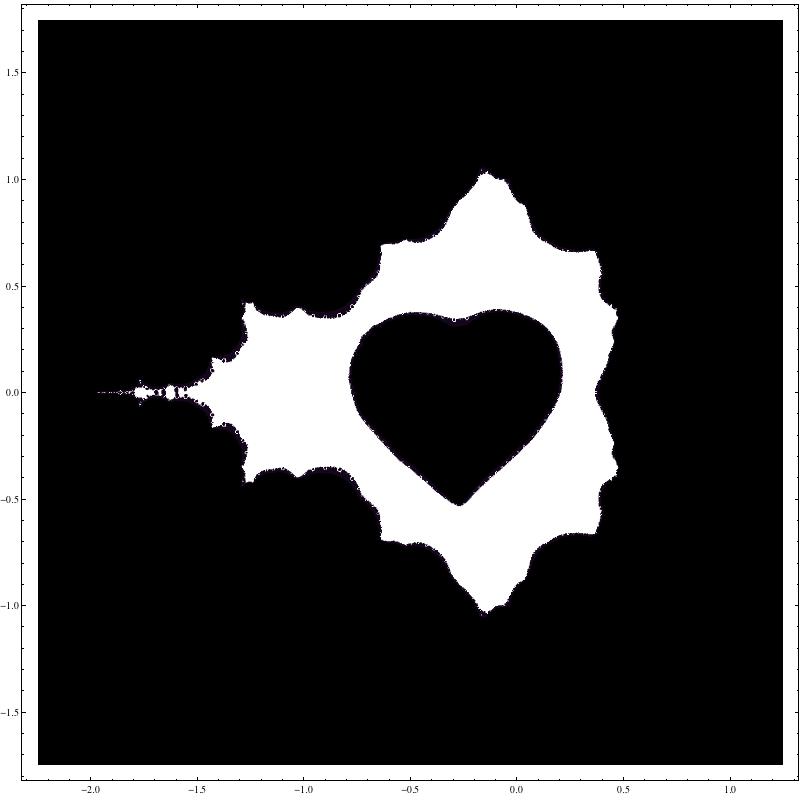}
  \caption[]{$\mathcal{K}(\rho)$ for a rational map $\rho$ constructed as in Theorem \ref{t:RationalVersion}.  }
  \end{figure}
  
  Let $\mathfrak{A}$ denote a finite collection $\{A_1,...,A_m\}$ of annuli in $\mathcal{A}$.  For each $i$, let $A_{i,0}$ and $A_{i,\infty}$ denote the bounded component and unbounded component, respectively, of $\hat{\mathbb{C}} \setminus A_i$.   Let $|\mathfrak{A}|$ denote the number of elements of $\mathfrak{A}$.  We will say that the elements of $\mathfrak{A}$ are \emph{mutually exterior} if $A_j \cap (A_i \sqcup A_{i,0}) = \emptyset$ for all $i \neq j$. 
  
  For any finite collection $\mathfrak{A}$ of mutually exterior annuli in $\mathcal{A}$ and any $n \in \mathbb{N}$, define a rational function $\Omega_{\mathfrak{A},n}:\hat{\mathbb{C}} \rightarrow \hat{\mathbb{C}}$ by 
  \begin{equation}  \label{eq:Omega}
\Omega_{\mathfrak{A},n}(z) = \left(
\sum_{j=1}^{|\mathfrak{A}|} (\omega_{A_j,n}(z) + 1)^{-1}
 \right)^{-1}.
  \end{equation}
 Also define for each $n \in \mathbb{N}$ the rational function $R_{\mathfrak{A},n}:\hat{\mathbb{C}} \rightarrow \hat{\mathbb{C}}$ by $$R_{\mathfrak{A},n}(z) = z \cdot \Omega_{\mathfrak{A},n}(z).$$
  Define also 
    $$\begin{array}{lcl}
  \mathfrak{A}_+ &=& \bigcup_{j=1}^{|\mathfrak{A}|} A_{j}, \\
  \mathfrak{A}_0 &=& \bigcup_{j=1}^{|\mathfrak{A}|} A_{j,0}, \\
  \mathfrak{A}_{\infty} &=& \bigcap_{j=1}^{|\mathfrak{A}|} A_{j,\infty}. \\
  \end{array}$$
  Thus we have a decomposition $\hat{\mathbb{C}} = \mathfrak{A}_0 \sqcup \mathfrak{A}_+ \sqcup \mathfrak{A}_{\infty}$.  
  
  \begin{lemma} \label{l:controlOnLotsOfAnnuli}
  Let $\mathfrak{A}$ be a finite collection of mutually exterior annuli in $\mathcal{A}$ and fix real numbers $B > b > 0$.  Then there exists $N \in \mathbb{N}$ such that for all integers $n \geq N$, 
  \begin{enumerate}
  \item $|\Omega_{\mathfrak{A},n}(z)| < b$ for all $z \in \mathfrak{A}_0$, and
  \item $|\Omega_{\mathfrak{A},n}(z)| > B$ for all $z \in \mathfrak{A}_{\infty}$.  
  \end{enumerate}
  \end{lemma}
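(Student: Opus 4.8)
The plan is to reduce both statements to Curtiss' uniform convergence results (Corollary \ref{c:omegaBehavior}) applied separately to each annulus $A_j$, using the mutual exteriority hypothesis to control where each summand $(\omega_{A_j,n}(z)+1)^{-1}$ lives. The key observation is that $\Omega_{\mathfrak{A},n}$ is the \emph{harmonic-sum} (reciprocal of a sum of reciprocals) of the factors $g_{j,n}(z) := \omega_{A_j,n}(z)+1$, so $|\Omega_{\mathfrak{A},n}(z)|$ is small exactly when \emph{some} $g_{j,n}(z)$ is small, and is large when \emph{all} $g_{j,n}(z)$ are large. I will treat the two parts accordingly.

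For part (i): fix $z \in \mathfrak{A}_0$, so $z \in A_{j_0,0}$ for some index $j_0$. By Corollary \ref{c:omegaBehavior} applied to $A_{j_0}$, $\omega_{A_{j_0},n}(z) \to -1$ uniformly on the compact set $\overline{A_{j_0,0}}$ (here I use that $A_{j_0,0}$ together with its boundary is a compact subset of the interior region $I_{A_{j_0}}$, since $C_{A_{j_0}}$ lies in the interior of $A_{j_0}$, hence strictly outside $A_{j_0} \sqcup A_{j_0,0}$). Thus $|g_{j_0,n}(z)|$ can be made arbitrarily small, uniformly over $z \in A_{j_0,0}$, for $n$ large. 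Since $\left|\Omega_{\mathfrak{A},n}(z)\right| = \left|\sum_j g_{j,n}(z)^{-1}\right|^{-1} \le \left|g_{j_0,n}(z)^{-1}\right|^{-1} \cdot \bigl(\text{something}\bigr)$ — more precisely, I want the lower bound on $\bigl|\sum_j g_{j,n}(z)^{-1}\bigr|$, which requires that the term $g_{j_0,n}(z)^{-1}$ (which is blowing up) is not cancelled by the other terms. For $j \ne j_0$, the point $z \in A_{j_0,0}$ need not lie in $I_{A_j}$; this is where mutual exteriority is essential. I expect this to be the main obstacle: I must show the other summands $g_{j,n}(z)^{-1}$, $j\ne j_0$, stay bounded (or at least grow slower), so that the dominant term survives. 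Mutual exteriority gives $A_{j_0} \sqcup A_{j_0,0} \subset A_{j,\infty}$, hence $z \in A_{j,\infty} \subset C_{A_j} \sqcup O_{A_j}$, and on that set Corollary \ref{c:omegaBehavior} gives $g_{j,n}(z) = \omega_{A_j,n}(z)+1 \sim w_j^n$ where $w_j = \varphi_{A_j}^{-1}(z)$ with $|w_j|>1$; so $|g_{j,n}(z)^{-1}| \to 0$. Therefore \emph{every} term except possibly $g_{j_0,n}(z)^{-1}$ is bounded, and that term dominates, forcing $|\Omega_{\mathfrak{A},n}(z)|$ small. A uniformity caveat: $z$ ranges over all of $\mathfrak{A}_0$, so I should argue index-by-index — for each fixed $j_0$, get a single $N_{j_0}$ that works uniformly for $z \in A_{j_0,0}$ (using that $\overline{A_{j_0,0}}$ is compact and, for each $j\ne j_0$, $\overline{A_{j_0,0}}$ is a compact subset of $A_{j,\infty} \subset C_{A_j}\sqcup O_{A_j}$) — then take the max over the finitely many indices.

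For part (ii): fix $z \in \mathfrak{A}_\infty = \bigcap_j A_{j,\infty}$. Then for every $j$, $z \in C_{A_j} \sqcup O_{A_j}$, so by Corollary \ref{c:omegaBehavior} $g_{j,n}(z) = \omega_{A_j,n}(z)+1 \sim w_j^n - 1 + 1 = $ (asymptotic to $w_j^n$ in the sense that $(\omega_{A_j,n}(z)+1)/(w_j^n-1)\to 1$) where $|w_j| > 1$ uniformly on $A_{j,\infty}$. Hence $|g_{j,n}(z)| \to \infty$ uniformly on $A_{j,\infty}$, so $|g_{j,n}(z)^{-1}| \to 0$ uniformly there, and consequently $\bigl|\sum_{j=1}^{|\mathfrak{A}|} g_{j,n}(z)^{-1}\bigr| \le \sum_j |g_{j,n}(z)^{-1}| \to 0$ uniformly on $\mathfrak{A}_\infty$ (finite sum). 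Taking reciprocals, $|\Omega_{\mathfrak{A},n}(z)| \to \infty$ uniformly on $\mathfrak{A}_\infty$, which gives (ii) directly: pick $N$ so that the sum has modulus less than $1/B$ for $n \ge N$. This part is routine once the uniform lower bound $|w_j| \ge \delta_j > 1$ on $A_{j,\infty}$ is invoked exactly as in the proof of Lemma \ref{l:lemma1}.

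Finally I take $N$ to be the maximum of the finitely many thresholds produced in parts (i) and (ii). The only genuinely delicate point, as flagged, is the lower bound on $\bigl|\sum_j g_{j,n}(z)^{-1}\bigr|$ in part (i): a sum of complex numbers can be small even when one term is large, so I cannot merely bound termwise. The resolution is that all \emph{other} terms are not just bounded but tend to $0$ (by the mutual-exteriority argument above), so for $n$ large $\bigl|\sum_{j \ne j_0} g_{j,n}(z)^{-1}\bigr| < \tfrac12 |g_{j_0,n}(z)^{-1}|$, whence $\bigl|\sum_j g_{j,n}(z)^{-1}\bigr| \ge \tfrac12 |g_{j_0,n}(z)^{-1}| \to \infty$, and $|\Omega_{\mathfrak{A},n}(z)| \le 2|g_{j_0,n}(z)| \to 0$, all uniformly over $z \in A_{j_0,0}$ and then over the finitely many choices of $j_0$.
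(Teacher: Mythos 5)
Your proposal is correct and follows essentially the same route as the paper: both exploit the harmonic-sum structure of $\Omega_{\mathfrak{A},n}$, use mutual exteriority to place $z \in A_{j,\infty}$ for $j \neq j_0$ so that the reverse triangle inequality shows the $j_0$-th term dominates in part (i), and bound termwise in part (ii). The paper simply fixes explicit thresholds $d$ and $D$ up front in place of your asymptotic factor-of-two argument; the content is identical.
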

  
  \begin{proof}
  Pick positive real numbers $d$ and $D$ with $d$ small enough and $D$ large enough that $$\left|\frac{1}{d} - \frac{|\mathfrak{A}|-1}{D}\right|^{-1} < b \ \  \textrm{ and } \ \  \frac{D}{|\mathfrak{A}|} > B.$$
  As a consequence of Corollary \ref{c:omegaBehavior}, there exists $N \in \mathbb{N}$ such that for all $n \geq N$ and all $1 \leq j \leq |\mathfrak{A}|$, $|\omega_{A_j,n} (z)+ 1| < d$ for all $z \in A_{j,0}$ and $|\omega_{A_j,n} (z)+ 1| > D$ for all $z \in A_{j,\infty}$. 
  Fix $n \geq N$.  For $z \in A_{i,0}$,
$$\left| \sum_{j=1}^{|\mathfrak{A}|} \frac{1}{\omega_{A_j,n}(z)+1}  \right|  \geq 
\left| \left| \frac{1}{\omega_{A_i,n}(z)+1} \right| - \sum_{j=1,j \not = i}^{|\mathfrak{A}|} \left| \frac{1}{\omega_{j,n}(z)+1} \right| \right| \geq \left| \frac{1}{d}-\frac{|\mathfrak{A}|-1}{D} \right|. $$
Hence $|\Omega_{\mathfrak{A},n}(z)| < b$ for all $z \in \mathfrak{A}_0$.   For $z \in \mathfrak{A}_{\infty}$,
 $$\left| \sum_{j=1}^{|\mathfrak{A}|} \frac{1}{\omega_{A_j,n}(z)+1}  \right| \leq \sum_{j=1}^{|\mathfrak{A}|} \left |\frac{1}{\omega_{A_j,n}(z)+1} \right| \leq \frac{|\mathfrak{A}|}{D}.$$ Hence $|\Omega_{\mathfrak{A},n}(z)| > B$ for all $z \in \mathfrak{A}_{\infty}$. 
  \end{proof}
  
  \begin{theorem} \label{t:RationalVersion}
  Let $\mathfrak{A}=\{A_1,...,A_m\}$ be a finite collection of mutually exterior annuli in $\mathcal{A}$.  Fix any point $t \in \mathfrak{A}_0$ and let $\tau:\hat{\mathbb{C}} \rightarrow \hat{\mathbb{C}}$ be the translation $z \mapsto z-t$.

   Then there exists $N \in \mathbb{N}$ such that for any integer $n \geq N$, 
  \begin{enumerate}
  \item $\mathfrak{A}_0 \subset \mathcal{K}(\tau^{-1} \circ R_{\tau(\mathfrak{A}),n} \circ \tau)$
  \item $\mathfrak{A}_{\infty} \subset \mathcal{L}(\tau^{-1} \circ R_{\tau(\mathfrak{A}),n} \circ \tau)$
  \item $ \mathcal{J}(\tau^{-1} \circ R_{\tau(\mathfrak{A}),n} \circ \tau) \subset \mathfrak{A}_+$
  \item For any $A_i \in \mathcal{A}$, any path connecting $A_{i,\infty}$ and $A_{i,0}$ has nonempty intersection with $\mathcal{J}(\tau^{-1} \circ R_{\tau(\mathfrak{A}),n} \circ \tau)$.
  \end{enumerate}
  \end{theorem}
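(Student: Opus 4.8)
The plan is to follow the proof of Theorem~\ref{t:JuliaFitInAnnulus} almost verbatim, with Lemma~\ref{l:controlOnLotsOfAnnuli} now playing the part that Lemma~\ref{l:lemma1} played there. First I would reduce to the case $t=0$, $\tau=\mathrm{id}$ exactly as in the last paragraph of that proof: a translation is a homeomorphism of $\hat{\mathbb{C}}$ fixing $\infty$, it sends a finite collection of mutually exterior annuli in $\mathcal{A}$ to another such collection, it intertwines $R_{\tau(\mathfrak{A}),n}$ with the conjugate $\tau^{-1}\circ R_{\tau(\mathfrak{A}),n}\circ\tau$, and $\mathcal{K}$, $\mathcal{L}$, $\mathcal{J}$ of the conjugate are just the $\tau^{-1}$-images of those of $R_{\tau(\mathfrak{A}),n}$. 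So I may assume $0\in\mathfrak{A}_0$, say $0\in A_{k,0}$, and fix $r>0$ with $B_r(0)\subset A_{k,0}$.

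Set $\alpha=\sup\{|z|:z\in\mathfrak{A}_0\}$, $\beta=\sup\{|z|:z\in\mathfrak{A}_0\cup\mathfrak{A}_+\}$ and $\sigma=\inf\{|z|:z\in\mathfrak{A}_\infty\}$; all three are finite, and $\sigma\ge r>0$ because $B_r(0)\subset A_{k,0}$ is disjoint from $\mathfrak{A}_\infty$. Applying Lemma~\ref{l:controlOnLotsOfAnnuli} with any $b<r/\alpha$ and any $B>\max\{1,\beta/\sigma,b\}$ produces $N$ so that for $n\ge N$ we have $|\Omega_{\mathfrak{A},n}|<b$ on $\mathfrak{A}_0$ and $|\Omega_{\mathfrak{A},n}|>B$ on $\mathfrak{A}_\infty$. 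Then for $n\ge N$: if $z\in\mathfrak{A}_0$ then $|R_{\mathfrak{A},n}(z)|=|z|\,|\Omega_{\mathfrak{A},n}(z)|<\alpha b<r$, so $R_{\mathfrak{A},n}(z)\in B_r(0)\subset\mathfrak{A}_0$ and the forward orbit of $z$ stays in $B_r(0)$, giving (i); if $z\in\mathfrak{A}_\infty$ then $|R_{\mathfrak{A},n}(z)|>B|z|\ge B\sigma>\beta\ge\sup\{|w|:w\in\hat{\mathbb{C}}\setminus\mathfrak{A}_\infty\}$, so $R_{\mathfrak{A},n}(z)\in\mathfrak{A}_\infty$ and $|R_{\mathfrak{A},n}^{m}(z)|>B^{m}|z|\to\infty$, giving (ii) (and, since $\infty\in\mathfrak{A}_\infty$, showing that $\infty$ is an attracting fixed point so that $\mathcal{K},\mathcal{L},\mathcal{J}$ make sense). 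Statement (iii) is then formal: $\mathfrak{A}_0$ and $\mathfrak{A}_\infty$ are open, so (i) forces $\mathfrak{A}_0\subset\mathcal{K}(R_{\mathfrak{A},n})^{\circ}$ and (ii) forces $\mathfrak{A}_\infty\subset\hat{\mathbb{C}}\setminus\mathcal{K}(R_{\mathfrak{A},n})$, whence $\mathcal{J}(R_{\mathfrak{A},n})=\partial\mathcal{K}(R_{\mathfrak{A},n})$ misses $\mathfrak{A}_0\cup\mathfrak{A}_\infty$ and lies in $\mathfrak{A}_+$.

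The substantive new point is (iv), and it comes from a purely topological consequence of mutual exteriority. Write $D_j=A_j\sqcup A_{j,0}$, a closed disk with $\partial D_j=\partial A_{j,\infty}$. For $j\ne i$ the connected set $D_i$ is disjoint from $A_j$ (mutual exteriority, applied with the roles of $i$ and $j$ interchanged), hence lies in a single complementary component of $A_j$; it cannot lie in $A_{j,0}$ since $A_i\cap A_{j,0}=\emptyset$, so $D_i\subset A_{j,\infty}$. As $D_i$ is compact and each $A_{j,\infty}$ open, there is $\eta>0$ with $N_\eta(D_i)\subset\bigcap_{j\ne i}A_{j,\infty}$, and therefore $N_\eta(D_i)\setminus D_i\subset A_{i,\infty}\cap\bigcap_{j\ne i}A_{j,\infty}=\mathfrak{A}_\infty\subset\mathcal{L}(R_{\mathfrak{A},n})$. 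Thus each point of $\partial A_{i,\infty}=\partial D_i$ is a limit of points of $\mathcal{L}(R_{\mathfrak{A},n})$ and so cannot lie in the interior of $\mathcal{K}(R_{\mathfrak{A},n})$; that is, $\partial A_{i,\infty}\subset\mathcal{J}(R_{\mathfrak{A},n})\sqcup\mathcal{L}(R_{\mathfrak{A},n})$.

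To finish (iv), let $\gamma$ be any path with one endpoint in $A_{i,\infty}$ and the other in $A_{i,0}$. Since $A_{i,\infty}$ and $A_{i,0}$ are the two complementary components of the closed annulus $A_i$ and $A_{i,0}$ is disjoint from $\overline{A_{i,\infty}}$, the path must contain a point $x\in\partial A_{i,\infty}$. If $\gamma$ avoided $\mathcal{J}(R_{\mathfrak{A},n})$ it would --- being connected --- lie in one of the disjoint open sets $\mathcal{K}(R_{\mathfrak{A},n})^{\circ}$, $\mathcal{L}(R_{\mathfrak{A},n})$ into which $\hat{\mathbb{C}}\setminus\mathcal{J}(R_{\mathfrak{A},n})$ splits; the $A_{i,0}$-endpoint lies in $\mathfrak{A}_0\subset\mathcal{K}(R_{\mathfrak{A},n})$ and rules out the second, while $x$ rules out the first. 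Hence $\gamma$ meets $\mathcal{J}(R_{\mathfrak{A},n})$. The only real obstacle is the bookkeeping in the last two paragraphs --- establishing the ``buffer'' of $\mathfrak{A}_\infty$ (hence of $\mathcal{L}$) surrounding each $D_i$ that mutual exteriority guarantees; parts (i)--(iii) are essentially a re-run of Theorem~\ref{t:JuliaFitInAnnulus} once Lemma~\ref{l:controlOnLotsOfAnnuli} is in hand.
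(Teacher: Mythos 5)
Your proof is correct and follows the same route as the paper's: reduce to $t=0$ by conjugating with the translation, then apply Lemma \ref{l:controlOnLotsOfAnnuli} with constants chosen so that $\mathfrak{A}_0$ maps into a small disk about the origin and $\mathfrak{A}_\infty$ maps into itself with modulus growing by a factor $B>1$, which gives (i)--(iii). The one place you go beyond the paper is part (iv): the paper establishes (i) and (ii) and leaves (iii)--(iv) implicit, but the polynomial-case argument (``the path joins a point of $\mathcal{K}$ to a point of $\mathcal{L}$'') does not literally transfer, since a path from $A_{i,\infty}$ to $A_{i,0}$ may begin inside some $A_{j,0}\subset\mathcal{K}$ with $j\neq i$. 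Your observation that mutual exteriority forces each $D_i=A_i\sqcup A_{i,0}$ to be surrounded by a collar lying in $\mathfrak{A}_\infty\subset\mathcal{L}$, so that $\partial A_{i,\infty}$ meets the closure of $\mathcal{L}$ and hence avoids the interior of $\mathcal{K}$, is exactly the step needed to make (iv) rigorous, and your execution of it is correct.
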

  
  \begin{proof}
  To begin, we will consider the case that $0 \in \mathfrak{A}_0$ and $\tau$ is the identity map.  So assume $0 \in \mathfrak{A}_0$ and let $\rho$ be the radius of a small ball centered at $0$ and contained in $\mathfrak{A}_0$.  Fix a real number $b > 0$ small enough that 
  $$b \cdot \sup \{|z| \mid z \in \mathfrak{A}_0\} < \rho.$$  Fix a real number $B > 1$ large enough that 
  $$B \cdot \inf \{|z| \mid z \in \mathfrak{A}_{\infty}\} > \sup\{|z| \mid z \in \mathfrak{A}_+ \cup \mathfrak{A}_0\}.$$
  
  Let $N$ be the integer guaranteed by Lemma \ref{l:controlOnLotsOfAnnuli}, and fix an integer $n \geq N$.  Then for any $z \in \mathfrak{A}_0$
  $$|R_{\mathfrak{A},n}(z)| = |z| \cdot | \Omega_{\mathfrak{A},n}(z)| \leq |z| \cdot b < \rho.$$

    Hence $R_{\mathfrak{A},n}(z) \in \mathfrak{A}_0$ for any $z \in \mathfrak{A}_0$. 
  For any $z \in \mathfrak{A}_{\infty}$ 
  $$ |R_{\mathfrak{A},n}(z)| = |z| \cdot |\Omega_{\mathfrak{A},n}(z)| \geq |z| \cdot B.$$  Hence for any $z \in \mathfrak{A}_{\infty}$, we have $R_{\mathfrak{A},n}(z) \in \mathfrak{A}_{\infty}$ and so $|R^m_{\mathfrak{A},n}(z) | > B^m |z|$ for all $m \in \mathbb{N}$. 
  Therefore, the basin of attraction of $\infty$ for the map $R_{\mathfrak{A},n}$ contains all of $\mathfrak{A}_{\infty}$ and has empty intersection with $\mathfrak{A}_0$.  This proves the theorem in the case that $0 \in \mathfrak{A}_0$ and $\tau$ is the identity map. 
  
  For the general case, in which $\tau$ is any fixed map $z \mapsto z-t$ for some $t \in \mathfrak{A}_0$, it suffices to observe that $\mathcal{K}(\tau^{-1} \circ R_{\tau(\mathfrak{A}),n} \circ \tau) = \tau^{-1}(\mathcal{K}(R_{\mathfrak{A},n}))$.
  \end{proof}
  
  For any finite collection $\mathfrak{J}=\{J_1,...,J_m\}$ of mutually exterior bounded Jordan curves in $\hat{\mathbb{C}}$, define the sets $\mathfrak{J}_+$, $\mathfrak{J}_0$, and $\mathfrak{J}_{\infty}$ as follows.  Let $\mathfrak{J}_+ = \cup_i J_i$.  Let $\mathfrak{J}_0$ denote the union of the bounded connected components of $\hat{\mathbb{C}}\setminus J_i$.  Let $\mathfrak{J}_{\infty} = \hat{\mathbb{C}} \setminus (\mathfrak{J}_+ \cup \mathfrak{J}_0)$.  Thus $\hat{\mathbb{C}} = \mathfrak{J}_0 \sqcup \mathfrak{J}_+ \sqcup \mathfrak{J}_{\infty}$. 
  
  \begin{theorem}  \label{t:HausdorffVersionRational}
  Let $\mathfrak{J}=\{J_1,...,J_m\}$ be a finite collection of mutually exterior bounded Jordan curves in $\hat{\mathbb{C}}$ and fix a real number $\delta>0$.  Then there exists a rational map $Q:\hat{\mathbb{C}} \rightarrow \hat{\mathbb{C}}$ for which $\infty$ is an attracting fixed point and 
  \begin{enumerate}
  \item $d_H(\mathfrak{J}_0,\mathcal{K}(Q)) < \delta$,
  \item $d_H(\mathfrak{J}_+,\mathcal{J}(Q)) < \delta$, and
  \item $d_H(\mathfrak{J}_{\infty},\mathcal{L}(Q))<\delta$.
  \end{enumerate}
  \end{theorem}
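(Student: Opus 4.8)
The plan is to follow the scheme of the proof of Theorem~\ref{t:HausdorffVersion}: thicken each Jordan curve $J_i$ to a thin annulus $J_i^{\epsilon}\in\mathcal{A}$ using its Jordan--Schoenflies coordinates, check that the resulting finite collection $\mathfrak{A}=\{J_1^{\epsilon},\dots,J_m^{\epsilon}\}$ is mutually exterior, apply Theorem~\ref{t:RationalVersion} to $\mathfrak{A}$ to obtain the rational map $Q$, and finally compare the annular decomposition $\hat{\mathbb{C}}=\mathfrak{A}_0\sqcup\mathfrak{A}_+\sqcup\mathfrak{A}_{\infty}$ with the curve decomposition $\hat{\mathbb{C}}=\mathfrak{J}_0\sqcup\mathfrak{J}_+\sqcup\mathfrak{J}_{\infty}$.

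First I would fix $\delta_1$ with $0<\delta_1<\delta/3$ and, in addition, small enough that the closed $\delta_1$-neighborhoods of the closed bounded Jordan regions $\overline{I_1},\dots,\overline{I_m}$ are pairwise disjoint; this is possible because mutual exteriority of the $J_i$ forces the compact sets $\overline{I_i}$ to be pairwise disjoint, hence at positive mutual distance. Applying Lemma~\ref{l:approxbyannuli} to each $J_i$ with parameter $\delta_1$ yields $\rho_i>0$; set $\rho=\min_i\rho_i$ and fix any $\epsilon$ with $0<\epsilon<\rho$. Each $J_i^{\epsilon}$ is the closure of a bounded open annulus, so $J_i^{\epsilon}\in\mathcal{A}$, and the Jordan--Schoenflies coordinates for $J_i$ give $J_i\subset J_i^{\epsilon}$ and $(J_i^{\epsilon})_0\subset I_i\subset(J_i^{\epsilon})_0\sqcup J_i^{\epsilon}$. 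Since property~(i) of Lemma~\ref{l:approxbyannuli} gives $J_i^{\epsilon}\subset N_{\delta_1}(J_i)$, the set $J_i^{\epsilon}\sqcup(J_i^{\epsilon})_0$ is contained in $N_{\delta_1}(\overline{I_i})$, so by the choice of $\delta_1$ the annuli $J_1^{\epsilon},\dots,J_m^{\epsilon}$ are mutually exterior. Now Theorem~\ref{t:RationalVersion}, applied to $\mathfrak{A}$ with a translation carrying some point of $\mathfrak{A}_0$ to the origin, produces a rational map $Q$ for which $\infty$ is a (super)attracting fixed point and which satisfies $\mathfrak{A}_0\subset\mathcal{K}(Q)$, $\mathfrak{A}_{\infty}\subset\mathcal{L}(Q)$, $\mathcal{J}(Q)\subset\mathfrak{A}_+$, and the path-crossing property~(iv) for each $J_i^{\epsilon}$.

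Next I would assemble the comparison. The inclusions above give $\mathfrak{A}_0\subset\mathfrak{J}_0\subset\mathfrak{A}_0\cup\mathfrak{A}_+$ and $\mathfrak{J}_+\subset\mathfrak{A}_+$, whence also $\mathfrak{A}_{\infty}=\hat{\mathbb{C}}\setminus(\mathfrak{A}_0\cup\mathfrak{A}_+)\subset\hat{\mathbb{C}}\setminus(\mathfrak{J}_0\cup\mathfrak{J}_+)=\mathfrak{J}_{\infty}$ and $\mathfrak{J}_{\infty}\subset\hat{\mathbb{C}}\setminus\mathfrak{A}_0=\mathfrak{A}_+\cup\mathfrak{A}_{\infty}$; combined with $\mathfrak{A}_0\subset\mathcal{K}(Q)\subset\mathfrak{A}_0\cup\mathfrak{A}_+$ and $\mathfrak{A}_{\infty}\subset\mathcal{L}(Q)\subset\mathfrak{A}_+\cup\mathfrak{A}_{\infty}$ from Theorem~\ref{t:RationalVersion}, this shows that each of the symmetric differences $\mathcal{K}(Q)\bigtriangleup\mathfrak{J}_0$, $\mathcal{L}(Q)\bigtriangleup\mathfrak{J}_{\infty}$ and $\mathcal{J}(Q)\bigtriangleup\mathfrak{J}_+$ lies inside $\mathfrak{A}_+\subset N_{\delta_1}(\mathfrak{J}_+)$. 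To turn this into the three Hausdorff bounds I would argue exactly as in Theorem~\ref{t:HausdorffVersion}: any point $x$ in such a symmetric difference lies in some $J_i^{\epsilon}$, hence within $\delta_1$ of a point $j\in J_i$, and property~(ii) of Lemma~\ref{l:approxbyannuli} supplies a point of $(J_i^{\epsilon})_0$ and a point of $(J_i^{\epsilon})_{\infty}$ within $\delta_1$ of $j$; the former lies in $\mathfrak{A}_0\subset\mathcal{K}(Q)\cap\mathfrak{J}_0$ and the latter — provided $\delta_1$ was chosen as above — lies in $\mathfrak{A}_{\infty}\subset\mathcal{L}(Q)\cap\mathfrak{J}_{\infty}$, so $x$ is within $2\delta_1<\delta$ of the relevant target set, giving (i) and (iii). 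For (ii) one instead takes any $j\in\mathfrak{J}_+$, uses property~(ii) to get nearby points of $(J_i^{\epsilon})_0$ and of $(J_i^{\epsilon})_{\infty}$, joins them to $j$ by a polygonal path of length at most $2\delta_1$ connecting $(J_i^{\epsilon})_0$ to $(J_i^{\epsilon})_{\infty}$, and invokes the path-crossing property~(iv) to find a point of $\mathcal{J}(Q)$ within $2\delta_1$ of $j$, while $\mathcal{J}(Q)\subset N_{\delta_1}(\mathfrak{J}_+)$ handles the reverse containment. A harmless shrinking of the ambient ball about $\infty$ makes the Hausdorff distance in (iii) well defined there.

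The main obstacle is the assertion that the ``outside'' point furnished by Lemma~\ref{l:approxbyannuli} actually lies in the \emph{global} exterior region $\mathfrak{A}_{\infty}=\bigcap_i(J_i^{\epsilon})_{\infty}$ rather than merely in $(J_i^{\epsilon})_{\infty}$: unlike in the single-curve situation, $(J_i^{\epsilon})_{\infty}$ contains all of the other regions $\overline{I_k}$ and so is not contained in $\mathfrak{J}_{\infty}$, and $\mathcal{L}(Q)$ is only guaranteed to contain $\mathfrak{A}_{\infty}$. This is precisely where the strengthened choice of $\delta_1$ is used: a point lying within $\delta_1$ of $J_i\subset\overline{I_i}$ cannot lie in $N_{\delta_1}(\overline{I_k})\supset J_k^{\epsilon}\sqcup(J_k^{\epsilon})_0$ for any $k\neq i$, hence lies in $(J_k^{\epsilon})_{\infty}$ for every $k$ and therefore in $\mathfrak{A}_{\infty}$. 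Beyond this point the argument is routine bookkeeping with the two partitions of $\hat{\mathbb{C}}$.
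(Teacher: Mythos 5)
Your proposal is correct and follows essentially the same route as the paper: thicken each $J_i$ to $J_i^{\epsilon}$ via Lemma \ref{l:approxbyannuli} with $\delta_1$ small enough to keep the thickened annuli mutually exterior, apply Theorem \ref{t:RationalVersion}, and transfer the four set-theoretic properties into Hausdorff bounds as in Theorem \ref{t:HausdorffVersion}. The paper leaves the final estimates to the reader; you supply them, correctly isolating the one genuinely new point (that the ``outside'' witness must land in $\mathfrak{A}_{\infty}=\bigcap_k (J_k^{\epsilon})_{\infty}$, not merely in $(J_i^{\epsilon})_{\infty}$) and resolving it with the disjoint-neighborhood choice of $\delta_1$.
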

  
  \begin{proof}
  Let $$\eta = \frac{1}{2} \inf \{ d(x,y) \mid x \in J_{i,0}, y \in J_{j,0}, i \not = j\}.$$  Because the curves in $\mathfrak{J}$ are mutually exterior, $\eta >0$.    Let $\delta_1 = \min\{\delta/3, \eta\}$.   For each $i$, let $\rho_i$ be the positive real number guaranteed by Lemma \ref{l:approxbyannuli} using the parameter $\delta_1$.   Let $\rho = \min\{\rho_0,\rho_1,...,\rho_m\}$.  
  
  Fix a real number $\epsilon$ such that $\rho > \epsilon >0$.  Let $\mathfrak{A}$ denote the collection of annulus $J^{\epsilon}_1,...,J^{\epsilon}_m$ (the notation $J^{\epsilon}$ is as defined at the beginning of Section 2).  The condition that $\delta_1 < \eta$ implies that the annuli in $\mathfrak{A}$ are all pairwise disjoint and thus mutually exterior.  Apply Theorem \ref{t:HausdorffVersionRational} using the finite collection of mutually exterior annuli $\mathfrak{A}$ to obtain a rational map $Q:\hat{\mathbb{C}} \rightarrow \hat{\mathbb{C}}$ such that 
  \begin{enumerate}
 \item $\mathfrak{A}_0 \subset \mathcal{K}$, 
\item  $\mathfrak{A}_{\infty} \subset \mathcal{L}(Q)$, 
\item $\mathcal{J}(Q) \subset \mathfrak{A}_+$, and
\item every path between a point in $\mathfrak{A}_0$ and a point in $\mathfrak{A}_{\infty}$ has nonempty intersection with $\mathcal{J}(Q)$.  
  \end{enumerate}
The Hausdorff distance estimates can then be proved in the same way as in the proof of  Theorem  \ref{t:HausdorffVersion}.
  \end{proof}
  
  \begin{theorem} \label{t:HausdorffRationalAnnuliVersion}
  Let $A$ be an annulus in $\mathcal{A}$ and fix a real number $\delta >0$.  Then there exists a rational function $S:\hat{\mathbb{C}} \rightarrow \hat{\mathbb{C}}$ for which $\infty$ is an attracting fixed point and 
  \begin{enumerate}
  \item $d_H(A,\mathcal{K}(S)) < \delta$,
  \item $d_H(A_{\infty} \sqcup A_0, \mathcal{L}(S)) < \delta$,
  \item $d_H(\partial A, \mathcal{J}(S)) < \delta$.
  \end{enumerate}
  \end{theorem}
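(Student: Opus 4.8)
The plan is to write down an explicit rational map built from the fundamental polynomials of Lagrange interpolation attached to two auxiliary curves, one hugging each boundary component of $A$ from inside. A direct Möbius conjugation of Theorem~\ref{t:HausdorffVersionRational} will not do the job: a Möbius map rendering the two (nested) boundary components of $A$ mutually exterior sends $A$ itself to the ``basin side'' and the two complementary disks $A_0,A_\infty$ to the ``filled Julia side'' --- the configuration dual to what we want --- and it also moves $\infty$, destroying the requirement that $\infty$ be the attracting fixed point. So I would instead perturb the annulus polynomial associated to the \emph{outer} boundary of $A$ by a rational term whose poles are concentrated in the \emph{inner} complementary disk $A_0$, forcing $A_0$ into the basin of $\infty$ while the bulk of $A$ stays trapped.

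In detail: after a translation (harmless, exactly as in the proofs of Theorems~\ref{t:JuliaFitInAnnulus} and~\ref{t:RationalVersion}) I would assume $0$ lies in the interior of $A$. Using the construction of Section~1, I would fix two nested essential curves $C_1,C_2$ of $A$, both lying in the interior of $A$, each of the form $C_B$ for a suitable $B\in\mathcal{A}$ so that Corollary~\ref{c:omegaBehavior} applies (hence $\omega_{C_i,n}\to -1$ uniformly on compact subsets of $I_{C_i}$, and $\omega_{C_i,n}/(\varphi_{C_i}^{-1}(z)^n-1)\to 1$ uniformly on $C_i\sqcup O_{C_i}$), with $C_1$ within $\delta/10$ of $\partial A_\infty$ and $C_2$ within $\delta/10$ of $\partial A_0$ and close enough to those components that $0$ lies in the open annular region strictly between $C_1$ and $C_2$. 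Shrinking that region slightly, fix a compact set $M$ with $0\in\mathrm{int}(M)$, $M\subset I_{C_1}\cap O_{C_2}$, $M\subset A$ and $A\subset N_\delta(M)$; note $\overline{A_0}\subset I_{C_2}\subset I_{C_1}$ and $\overline{A_\infty}\subset O_{C_1}\subset O_{C_2}$. The map I would use is
\[
S_n(z)\;=\;z\left(\omega_{C_1,n}(z)+1\right)\;+\;\frac{1}{\,\omega_{C_2,n}(z)+1\,},
\]
a rational map of degree $2n+1$ (the numerator equals $1$ at every pole, so there is no cancellation) for which $\infty$ is a superattracting fixed point; I claim $S:=S_n$ works once $n$ is large.

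The verification would be four short points, each flowing from Corollary~\ref{c:omegaBehavior} and the uniform estimates in the proof of Lemma~\ref{l:lemma1}. (i) On $\overline{A_0}$, $(\omega_{C_2,n}+1)^{-1}$ blows up uniformly while $z(\omega_{C_1,n}+1)\to 0$ uniformly, so $|S|>\sup\{|x|:x\in A\cup A_0\}$ there for $n$ large; hence $S(A_0)\subset A_\infty$ and $A_0\subset\mathcal{L}(S)$. (ii) On $\overline{A_\infty}$ (which misses $0$), the polynomial term dominates and $(\omega_{C_2,n}+1)^{-1}\to 0$, so for $n$ large $S(A_\infty)\subset A_\infty$ with $|S(z)|\geq 2|z|$, whence $A_\infty\subset\mathcal{L}(S)$. (iii) On the compact band $M$ both summands tend to $0$ uniformly, so $S(M)$ lands in an arbitrarily small disk about $0$; since $0\in\mathrm{int}(M)$ this gives $S(M)\subset M$, so $M\subset\mathcal{K}(S)$. (iv) $\infty\in\mathcal{L}(S)$ because it is superattracting.

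The Hausdorff bounds would then be assembled as in the proof of Theorem~\ref{t:HausdorffVersion}. From (i), (ii), (iv), $\mathcal{L}(S)\supseteq A_0\sqcup A_\infty$, so $\mathcal{K}(S)\subseteq\hat{\mathbb{C}}\setminus(A_0\sqcup A_\infty)=A$; from (iii), $\mathcal{K}(S)\supseteq M$; with $M\subseteq A\subseteq N_\delta(M)$ this gives $d_H(A,\mathcal{K}(S))<\delta$. Dually $\mathcal{L}(S)$ lies between $A_0\sqcup A_\infty$ and its $\delta$-neighborhood, so $d_H(A_\infty\sqcup A_0,\mathcal{L}(S))<\delta$. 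Finally $\mathcal{J}(S)=\partial\mathcal{K}(S)$ is confined to the thin collars $A\setminus\mathrm{int}(M)$, which sit within $\delta$ of $\partial A$, while each point of $\partial A$ is joined to $\mathrm{int}(M)$ by a segment of length $<\delta$ running from $\mathcal{L}(S)$ into $\mathcal{K}(S)$ and hence meeting $\mathcal{J}(S)$; so $d_H(\partial A,\mathcal{J}(S))<\delta$. I expect point (iii) to be the crux: the forward invariance of the middle band is what makes $\mathcal{K}(S)$ a ``fat'' annulus with interior essentially all of $A$ --- rather than a thin Julia-type curve --- and it works only because the initial translation puts the contracting dynamics of $S$ at a point of $M$ itself.
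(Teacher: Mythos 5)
Your proposal is correct and follows essentially the same route as the paper: the paper's map is exactly $S_n(z)=P_{E,n}(z)+\bigl(\omega_{F,n}(z)+1\bigr)^{-1}$ with $E,F$ thin annuli hugging the outer and inner boundary curves of $A$ (so your $C_1,C_2$ are the paper's $C_E,C_F$), the same five-piece decomposition into outer region, two collars, middle band $M$, and inner disk, the same translation putting $0$ in $M$, and the same estimates from Corollary~\ref{c:omegaBehavior} and Lemma~\ref{l:lemma1} showing $O\sqcup I\subset\mathcal{L}(S_n)$ and $M\subset\mathcal{K}(S_n)$. The only cosmetic difference is that the paper produces the collars by invoking Lemma~\ref{l:approxbyannuli} explicitly, which is also how one would tidy up your ``within $\delta/10$ of the boundary'' choices.
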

  
  \begin{proof}

  The boundary of $A$ is composed of two Jordan curves; let $C_1$ denote the ``outer" curve and $C_2$ denote the inner curve.  Set 
  $$ \xi = \inf \{d(x,y) \mid x \in C_1, y \in C_2\}.$$
  Notice the compactness of the Jordan curves $C_1$ and $C_2$ implies $\xi>0$.  Let $E$ and $F$ be the annuli guaranteed by Lemma \ref{l:approxbyannuli} using $C_1$ and $C_2$, respectively, and the parameter $\delta_1 =  \frac{1}{3}\min\{\delta,\xi\}$.   By construction, $E$ and $F$ are disjoint.  Thus $\hat{\mathbb{C}} \setminus (E \cup F)$ consists of three connected components; denote the ``outside" component (which contains $\infty$) by $O$, denote the ``middle" component by $M$, and denote the ``inside" component by $I$.  Thus $\hat{\mathbb{C}} = O \sqcup E \sqcup M \sqcup F \sqcup I$.  
  
Assume for now that $0 \in M$.  Fix real numbers $R>r>0$ such that $B_r(0) \subset M$ and $(\hat{\mathbb{C}} \setminus O) \subset B_R(0)$.  
Set $$\eta = \frac{1}{ \inf\{|z| : z \in O\} }.$$   Fix a real number $\kappa$ such that 
$$\kappa > \max \left\{1, \eta (1+\frac{r}{2}), \eta (R+ \frac{r}{2})\right\}.$$
By Lemma \ref{l:lemma1}, there exists $N_E \in \mathbb{N}$ such that $n > N_E$ implies 
$$\left|P_{E,n} (z)\right| < \frac{r}{2} \ \ \ \textrm{ for all } z \in M \sqcup F \sqcup I$$ and 
$$|P_{E,n}(z)| > \kappa |z|  \ \ \  \textrm{for all } z \in O.$$
Notice that $$\kappa |z| > \eta (R + \frac{r}{2}) |z| \geq R+ \frac{r}{2}$$ for all $z \in O$.  Notice also that 
\begin{equation} \label{eq:bigenough}
\kappa |z| - \frac{r}{2} > (2 + \frac{r}{2} \eta) |z| - \frac{r}{2} = 2|z| + \frac{r}{2}(\eta |z| -1)  \geq 2 |z|
\end{equation} for all $z \in O$.

By Corollary \ref{c:omegaBehavior}, there exists $N_F \in \mathbb{N}$ such that for any integer $m>N_F$, 

$$\left| \frac{1}{\omega_{F,m}(z)+1} \right| > R + \frac{r}{2} \ \ \ \textrm{   for all } z \in I$$ 
and
 $$\left| \frac{1}{\omega_{F,m}(z)+1} \right| < \frac{r}{2} \ \ \ \textrm{ for all } z \in M \cup E \cup O.$$

For each $n \in \mathbb{N}$, define a rational map $S_n:\hat{\mathbb{C}} \rightarrow \hat{\mathbb{C}}$ by $$S_n(z) = P_{E,n}(z) + \frac{1}{\omega_{F,n}(z) + 1} .$$

Set $N = \max\{N_E, N_F\}$.  Fix any integer  $n > N$. The inequalities above imply that $|S_n(z)| >R$ for all $z \in O \sqcup I$ and $|S_n(z)| < r$ for all $z \in M$.   Consequently, $S_n(z) \in O$ for all $z \in O \sqcup I$ and $S_n(z) \in M$ for all $z \in M$.  Furthermore, for any $z \in O$ the inequality (\ref{eq:bigenough}) implies 
$$|S_n(z)| \geq \left| |P_{E,n}(z)| - |\frac{1}{\omega_{F,n}(z)+1}| \right|  \geq \kappa |z| - \frac{r}{2} > 2|z|.$$  Hence $|S_n^t(z)| > 2^t|z|$ for all $z \in O$ and $t \in \mathbb{N}$.  
Consequently, $O \sqcup I \subset \mathcal{L}(S_n)$ and $M \subset \mathcal{K}(S_n)$.  The Hausdorff distance estimates may then be obtained in a way similar to that in the proof of Theorem \ref{t:HausdorffVersion}.

  \end{proof}
\section{Discussion}

 Theorem \ref{t:HausdorffVersion} extends the analogy between the dynamics of rational maps and Kleinian groups.   The discussion of inversive two-manifolds in \cite{SullivanThurston} examines Kleinian groups generated by inversions through circles arranged in a chain along a Jordan curve.  The approach in \cite{SullivanThurston} leads to the following theorem:
\begin{theorem}  \label{t:KleinianCase} Any simple closed curve $C$ can be approximated in the Hausdorff topology by limits sets of finitely-generated quasi-Fuchsian groups.
 \end{theorem}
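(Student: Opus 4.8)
The plan is to mimic the chain-of-circles construction for Kleinian groups described in \cite{SullivanThurston}, paralleling the way Theorem~\ref{t:HausdorffVersion} builds polynomial Julia sets that track a Jordan curve. First I would fix the simple closed curve $C$ and a target precision $\delta>0$. Using uniform continuity of a homeomorphism $\mathbb{C}\to\mathbb{C}$ carrying $C$ to the unit circle (as in the Jordan--Schoenflies setup at the start of Section~2), I would choose a finite collection of round disks $D_1,\dots,D_k$ arranged cyclically along $C$, with the properties that consecutive disks $D_i$, $D_{i+1}$ overlap, nonconsecutive disks are disjoint, each $D_i$ is centered within $\delta$ of $C$, and every point of $C$ lies within $\delta$ of some $D_i$. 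One also wants the disks chosen so that the reflections generate a discrete group; this is the content of the inversive-two-manifold analysis in \cite{SullivanThurston}, and I would invoke it rather than reprove it.

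Next I would let $G$ be the group generated by the inversions $\iota_i$ through the boundary circles $\partial D_i$. The overlap/disjointness pattern of a cyclic chain is exactly the configuration treated in \cite{SullivanThurston}, which shows that for a suitable choice of circles $G$ is a finitely generated quasi-Fuchsian group (a quasiconformal deformation of a Fuchsian group on a once-punctured torus or a four-times-punctured sphere, depending on the parity and combinatorics of the chain). The key geometric point to extract is that the limit set $\Lambda(G)$ is a quasicircle that is ``trapped'' by the chain: $\Lambda(G)$ is contained in $\bigcup_i D_i\subset N_\delta(C)$, and on the other hand $\Lambda(G)$ separates the ``inner'' complementary region of the chain from the ``outer'' one, so it must come within $\delta$ of every point of $C$. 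These two containments are the analogues of the facts $\mathcal{J}(P_n)\subset J^\epsilon$ and ``every path from $\partial J^\epsilon_0$ to $\partial J^\epsilon_\infty$ meets $\mathcal{J}(P_n)$'' used in the proof of Theorem~\ref{t:HausdorffVersion}.

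With those two containments in hand, the Hausdorff estimate $d_H(C,\Lambda(G))<\delta$ follows by precisely the argument given in the proof of Theorem~\ref{t:HausdorffVersion}: any point of $\Lambda(G)$ is within $\delta$ of $C$ by the first containment, and any point $c\in C$ is within $\delta$ of a point of $\Lambda(G)$ because a short path from the inner side of the chain to the outer side, passing near $c$, must cross the separating set $\Lambda(G)$. I expect the main obstacle to be the discreteness and quasi-Fuchsian-ness of $G$: a naive chain of inversions need not generate a discrete group, and one must arrange the radii and the overlaps (e.g.\ via the combinatorial conditions on the ``inversive distances'' between the circles) so that the Poincaré polyhedron theorem, or the explicit deformation-theoretic description in \cite{SullivanThurston}, applies. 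Verifying the separation property of the limit set also requires care, since it hinges on the topology of the region tiled by the fundamental domain of $G$; but again this is exactly what the inversive-two-manifold picture in \cite{SullivanThurston} supplies, so the proof is ultimately a matter of transcribing that construction and then repeating the Hausdorff bookkeeping from Theorem~\ref{t:HausdorffVersion}.
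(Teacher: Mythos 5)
The paper does not actually prove Theorem \ref{t:KleinianCase}: it appears in the Discussion section as a known consequence of the inversive two-manifold construction in \cite{SullivanThurston}, with the statement attributed to \cite{HubbardV2}. So there is no in-paper argument to compare yours against; the paper only records the analogy (roots of polynomials strung along the curve versus centers of inverting circles strung along the curve). Your sketch is consistent with the construction the paper alludes to --- a chain of circles along $C$, the limit set of the group generated by the inversions trapped in the union of the disks and separating the two complementary sides of the chain, followed by the same Hausdorff bookkeeping as in the proof of Theorem \ref{t:HausdorffVersion}.

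Two caveats on the sketch itself. First, in the standard necklace construction consecutive circles are taken tangent, or meeting at angles of the form $\pi/k$, so that discreteness follows from the Poincar\'e polyhedron theorem; for circles that merely ``overlap'' the group generated by the inversions need not be discrete. You flag this, but it is the entire analytic content of the theorem, and your proposal, like the paper, outsources it wholesale to \cite{SullivanThurston}. Second, the group generated by the inversions contains orientation-reversing elements and so is not itself quasi-Fuchsian; one should pass to the index-two orientation-preserving subgroup, which has the same limit set. As a proof sketch deferring to the cited construction your outline is reasonable and matches the intended route; as a self-contained proof it has a genuine gap exactly where you locate it, namely the discreteness and quasi-Fuchsian structure of the chain group.
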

This result above is stated in \cite{HubbardV2}.  The maps constructed in this paper and in Theorem \ref{t:KleinianCase} all use a set of special points strung along the curve -- in the first case it is the roots of the polynomials, and in the second case it is the centers of the circles which define the inversions.

 \bigskip
 
 \noindent \textbf{Question.} Characterize those sets $S \subset \mathbb{C}$ such that for any $\epsilon>0$ there exists a polynomial (or rational map) $P$ such that $d_{Haus}(S,\mathcal{K}(P)) < \epsilon$ and $d_{Haus}(\hat{\mathbb{C}} - S,\hat{\mathbb{C}} - \mathcal{K}(P))<\epsilon$.  
\bigskip

The theme of this paper has been to first fix the ``shape" we wish to approximate, and then vary the degree of the polynomials whose Julia sets approximate the shape.  However, the reverse approach  -- first fix a degree $d$, and then analyze the set of Julia sets for polynomials of degree $d$ -- also warrants  investigation.  
 
 \bigskip
 
\noindent \textbf{Question.} Find a method to determine $$\{P: P \textrm{ is a polynomial of degree } d \textrm{ and } d_{Haus}(\mathcal{J}(P), S)  \leq  \epsilon\}$$ for any given $d \in \mathbb{N}$, $ \epsilon \geq 0$, and $S \subset \mathbb{C}$.

\bibliographystyle{amsalpha}
\bibliography{JuliaShapesBibliography}

\nocite{*}

\end{document}